\begin{document}

\title{A Matrix form of Ramanujan-type series for $1/\pi$}

\author{Jes\'{u}s Guillera}

\address{Av. Cesáreo Alierta, 31, esc. iz. $4^{o}$--A, Zaragoza, Spain.}

\email{jguillera@gmail.com}

\subjclass[2000]{33C20, 11F11}

\keywords{Ramanujan-type series for $1/\pi$; Ramanujan-like series for $1/\pi^2$; Picard-Fuchs differential equations; Modular functions.}

\maketitle

\newtheorem{theorem}{Theorem}[section]
\newtheorem{corollary}[theorem]{Corollary}
\newtheorem{lemma}[theorem]{Lemma}
\newtheorem{proposition}[theorem]{Proposition}
\newtheorem{definition}[theorem]{Definition}
\newtheorem{example}[theorem]{Example}
\newtheorem{remark}[theorem]{Remark}
\newtheorem{notation}[theorem]{Notation}
\newtheorem{question}[theorem]{Question}
\newtheorem{claim}[theorem]{Claim}
\newtheorem{conjecture}[theorem]{Conjecture}
\newtheorem{expansion}[theorem]{Expansion}

\numberwithin{equation}{section}

\renewcommand{\baselinestretch}{1}

\begin{abstract}
In this paper we prove theorems related to the Ramanujan-type series for $1/\pi$ (type $_3F_2$) and to the Ramanujan-like series, discovered by the author, for $1/\pi^2$ (type $_5F_4$). Our developments for the cases $_3 F_2$ and $_5 F_4$ connect with the theory of modular functions and with the theory of Calabi-Yau differential equations, respectively.
\end{abstract}

\section{Introduction}
In $1914$ Ramanujan discovered four families of series for $1/\pi$, corresponding to the values of $s=1/2$, $s=1/4$, $s=1/3$ and $s=1/6$, which are of the form:
\begin{equation}\label{rama-series}
\sum_{n=0}^{\infty} z^n \frac{ \left( \frac{1}{2} \right)_n (s)_n (1-s)_n}{ (1)_n^3} (a+bn)= \frac{1}{\pi},
\end{equation}
where $-1 \leq z<1$, $a$ and $b$ are algebraic numbers \cite{ramanujan}. The symbol $(s)_n$ used in the series is the rising factorial or Pochhammer symbol which is defined by
\begin{equation} \label{poch}
(s)_n=s(s+1)\cdots(s+n-1)
\end{equation}
for $n=1,2,\dots,$ and equal to $1$ for $n=0$. He gave $17$ examples of series of such type. About seventy years later Ramanujan's work related to these series began to be understood, and since then many other series of this type have been found and proved by using the theory of elliptic modular functions. For example, J. and P. Borwein, in their book \cite{borweinagm}, prove the $17$ series found by Ramanujan. In \cite{chudnovsky} D. and G. Chudnovsky obtained the fastest convergent Ramanujan-type series with a rational value of $z$. In \cite{chan} their authors derive some new series with $s=1/3$, somewhat implicit in Ramanujan's work. Very recently B. Berndt and N. Baruah in \cite{baruah1} and \cite{baruah2} get to follow more closely the initial ideas of Ramanujan as presented in Section 13 of the celebrated paper \cite{ramanujan}, and prove many new series of this type. In $2002$ we discovered similar families of series $1/\pi^2$, which are of the form
\begin{equation}\label{rama-gui}
\sum_{n=0}^{\infty} z^n \frac{ \left( \frac{1}{2} \right)_n (s)_n (t)_n (1-t)_n(1-s)_n}{ (1)_n^5} (a+bn+cn^2)= \frac{1}{\pi^2},
\end{equation}
where $-1 \leq z<1$, $a$, $b$ and $c$ are algebraic numbers. In the latter situation, there are $14$ possible couples of values for $(s,t)$, namely: $(1/2,1/2)$, $(1/2,1/3)$, $(1/2,1/4)$, \; $(1/2,1/6)$, \: $(1/3,1/3)$, \: $(1/3,1/4)$, \: $(1/3,1/6) $, \: $(1/4,1/4)$, \: $(1/4,1/6)$, \: $(1/6,1/6)$, $(1/5,2/5)$, $(1/8,3/8)$, $(1/10,3/10)$ and $(1/12,5/12)$ (see \cite{guilleraEMpi2}). Later, in \cite{guilleraEMconj}, we proposed some conjectures, for the cases $1/\pi$ and $1/\pi^2$. They motivate the study, that we are going to make, of the following expansions as $x \to 0$:

\begin{expansion}\label{conje1}
\begin{equation}\nonumber
|z|^x \sum_{n=0}^{\infty} z^n  \frac{ \left( \frac{1}{2} \right)_{n+x} (s)_{n+x} (1-s)_{n+x}}{ (1)_{n+x}^3} (a+b(n+x))=\frac{1}{\pi}-\frac{k \pi}{2}x^2+O(x^3).
\end{equation}
\end{expansion}

\begin{expansion}\label{conje2}
\begin{multline}\nonumber
|z|^x \sum_{n=0}^{\infty} z^n  \frac{ \left( \frac{1}{2} \right)_{n+x} (s)_{n+x} (t)_{n+x} (1-t)_{n+x} (1-s)_{n+x}}{ (1)_{n+x}^5} (a+b(n+x)+c(n+x)^2)=\\  \frac{1}{\pi^2}-\frac{k}{2}x^2+\frac{j}{24} \pi^2 x^4+O(x^5).
\end{multline}
\end{expansion}
In these expansions we consider the $z$-analytic objects in the half-plane $u\cdot\Re(z)>0$, where $u$ is the sign of $\Re(z)$; in particular, we replace $|z|$ with $uz$ in the discussion below. We also need the generalized definition of the Pochhammer symbol, namely: $(s)_x=\Gamma(s+x)/\Gamma(s)$, which reduces to (\ref{poch}) when $x$ is a non-negative integer. Note that Expansion \ref{conje2} was stated in \cite{guilleraEMconj} in a weaker form.
\\ \par Our idea for this work consists in replacing the variable $x$ with a fix nilpotent matrix $X$ of order three and five, respectively. This is a natural way of truncating the corresponding Taylor series and Lemma \ref{lema} implies that we do not lose the information required. In addition, this leads to a simpler formulation, since we get rid of the derivatives with respect to $x$.
\\ \par All matrices are the result of substitution of a nilpotent matrix $X$ in analytic functions $f(x)$. In other words, they are of the form
\[ A=f(X)=a_0 I+a_1 X + \cdots + a_{n-1} X^{n-1}, \]
where $n$ is the order of $X$ and $I$ the identity matrix. We say that $a_0$, $a_1$, \dots, $a_{n-1}$ are the components (or coefficients) of $A$. The product of such matrices is commutative; if $a_0\neq0$ the matrix $A$ has an inverse $A^{-1}$. For convenience, we also use the notation $A^{-1}=I/A$.
\\ \par In our computations with Maple 9, we choose nilpotent matrices $X$ whose unique nonzero entries are equal to $1$ in the positions $(j, j+1)$ of the matrix. With this choice, the components of a matrix $A$ are simply the entries of its first row.
\begin{lemma}\label{lema}
If $f(x)$ is an analytic function and
\begin{equation}\label{fXc}
f(X)=c_0 I+c_1 X + \cdots + c_{n-1} X^{n-1},
\end{equation}
where $X$ is a nilpotent matrix of order $n$, then
\[ f(x)=c_0 + c_1 x + \cdots + c_{n-1} x^{n-1}+O(x^n). \]
\end{lemma}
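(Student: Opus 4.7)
The plan is to compare the Taylor series of $f$ at the origin with the matrix expansion (\ref{fXc}), using the fact that any power of $X$ beyond the $(n-1)$-th vanishes.

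First I would write the Taylor expansion of the analytic function $f$ around $x=0$ as
\[ f(x)=\sum_{k=0}^{\infty} a_k x^k, \qquad a_k=\frac{f^{(k)}(0)}{k!}, \]
valid in a neighbourhood of $0$. Substituting the nilpotent matrix $X$ of order $n$ (which lies in a commutative subalgebra with $I$ and has spectral radius $0$, so the series converges in any submultiplicative norm) and using $X^k=0$ for all $k\ge n$, I obtain
\[ f(X)=\sum_{k=0}^{n-1} a_k X^k. \]

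Next I would exploit the linear independence of $I,X,X^2,\dots,X^{n-1}$. For the specific choice of $X$ described before the lemma, namely the nilpotent Jordan block with $1$'s in positions $(j,j+1)$, the matrix $X^k$ has its unique nonzero entries equal to $1$ in the positions $(j,j+k)$, so the family $\{I,X,\dots,X^{n-1}\}$ is visibly linearly independent. Comparing this expression of $f(X)$ with the given representation $f(X)=c_0 I+c_1 X+\cdots+c_{n-1}X^{n-1}$ yields $c_k=a_k$ for $0\le k\le n-1$.

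Finally I would conclude by substituting back into the Taylor series:
\[ f(x)=\sum_{k=0}^{n-1}c_k x^k+\sum_{k=n}^{\infty}a_k x^k = c_0+c_1 x+\cdots+c_{n-1}x^{n-1}+O(x^n), \]
as desired. There is essentially no hard step here; the only point worth a line of justification is the linear independence of $I,X,\dots,X^{n-1}$, which is immediate from the explicit form of $X$ chosen by the author.
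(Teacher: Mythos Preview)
Your proof is correct and follows essentially the same line as the paper's: expand $f$ in its Taylor series, substitute the nilpotent $X$, and then identify coefficients. The one point of difference is how the coefficient identification is justified. You verify linear independence of $I,X,\dots,X^{n-1}$ only for the specific Jordan block mentioned before the lemma, whereas the lemma is stated for an arbitrary nilpotent matrix $X$ of order $n$. The paper handles the general case directly: from
\[
(d_0-c_0)I+(d_1-c_1)X+\cdots+(d_{n-1}-c_{n-1})X^{n-1}=0
\]
it multiplies successively by $X^{n-1},X^{n-2},\dots$, using $X^{n-1}\neq 0$ to peel off the equalities $d_0=c_0,\ d_1=c_1,\dots$ one at a time. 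This is, of course, exactly a proof of the linear independence of $I,X,\dots,X^{n-1}$ for any nilpotent $X$ of order $n$, so your argument becomes fully general with this one-line adjustment.
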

\begin{proof}
Since $f(x)$ is an analytic function, it admits a power expansion,
\[ f(x)=d_0 + d_1 x + \cdots + d_{n-1} x^{n-1}+O(x^n). \]
Substituting $X$ for $x$, we obtain
\begin{equation}\label{fXd}
f(X)=d_0 I+d_1 X + \cdots + d_{n-1} X^{n-1}.
\end{equation}
The subtraction of (\ref{fXc}) from (\ref{fXd}) gives
\[ (d_0-c_0) I+(d_1-c_1) X + \cdots + (d_{n-1}-c_{n-1}) X^{n-1}=0. \]
Multiplying by $X^{n-1}$, we obtain $d_0=c_0$, hence
\[ (d_1-c_1) X + \cdots + (d_{n-1}-c_{n-1}) X^{n-1}=0. \]
Further, multiplying by $X^{n-2}$ we obtain $d_1=c_1$, and so on.
\end{proof}
We define the gamma function of a non-degenerated matrix $A$ as follows:
\[ \Gamma(A)=\int_{0}^{\infty} t^{A-I} e^{-t} dt. \]
For example, if $X$ is a nilpotent matrix of order $2$, we have
\[ \Gamma(I+X)=\int_{0}^{\infty} t^{X} e^{-t} dt=\int_{0}^{\infty} (I+X \ln t) e^{-t} dt=I-\gamma X.  \]
For $n=1, 2, \dots$, we define the Pochhammer symbol of a matrix $A$ as
\[ (A)_n=\prod_{j=0}^{n-1} (A+jI)=\frac{\Gamma(A+nI)}{\Gamma(A)}, \]
and we generalize this notion to matrix-valued indices: $(A)_B=\Gamma(A+B)/\Gamma(A)$. For abuse of notation, we often identify a number $n$ with the corresponding matrix $nI$. This has been done, for example, in the last definition.

\section{Matrix form of Expansion \ref{conje1}}

We obtain an equivalent of Expansion \ref{conje1} in a matrix form if we replace $x$ by a nilpotent matrix $X$ of order $3$. This equivalence is a consequence of Lemma \ref{lema}. In the notation
\[ P_n(s,X)= \frac{ \left( \frac{1}{2}I+X \right)_n (sI+X)_n ((1-s)I+X)_n}{ (I+X)_n^3}, \]
and with the property $(s)_{n+x}=(s+x)_n(s)_x$ we have
\begin{expansion}[Expansion \ref{conje1} in matrix form]\label{matrix3F2}
\begin{equation}\nonumber
\sum_{n=0}^{\infty} z^n P_n(s,X) \left( \frac{}{} \! aI+b(nI+X) \right)=
(uz)^{-X} P_X^{-1}(s,0) \left(\frac{1}{\pi}-\frac{k \pi}{2}X^2 \right),
\end{equation}
where $X$ is an arbitrary nilpotent matrix of order $3$.
\end{expansion}
If we denote
\[
A=\sum_{n=0}^{\infty} z^n P_n(s,X), \qquad B=\sum_{n=0}^{\infty} z^n P_n(s,X)(nI+X),
\]
and
\[
M=(uz)^{-X} P_X^{-1}(s,0) \left( \frac{1}{\pi} I -\frac{k \pi}{2} X^2 \right),
\]
then Expansion \ref{matrix3F2} can be written in the form $aA+bB=M$, which is equivalent to the system
\begin{equation}\label{sistema1}
\left(
  \begin{array}{ccc}
    a_{0} & b_{0} \\
    a_{1} & b_{1} \\
    a_{2} & b_{2} \\
  \end{array}
\right)
\left(
  \begin{array}{c}
    a \\
    b \\
  \end{array}
\right)=
\left(
  \begin{array}{c}
    m_{0} \\
    m_{1} \\
    m_{2} \\
  \end{array}
\right).
\end{equation}

\subsection{Picard-Fuchs differential equation}\label{sec-picard}
In Proposition \ref{propo1} we will see that the components of the matrix $Y=(uz)^X A$, namely
\begin{equation}\label{y0y1y2}
y_0=a_0, \qquad y_1=a_0 \ln(uz)+a_1, \qquad y_2=a_0 \frac{\ln^2(uz)}{2}+a_1 \ln(uz)+a_2,
\end{equation}
satisfy a Picard-Fuchs differential equation by proving that the matrix $Y$ itself satisfies that equation. In Proposition \ref{propo-picard-1-2} we will see that these solutions are the squares of the solutions of a simpler Picard-Fuchs differential equation.

\begin{proposition}\label{propo1}
The matrix $Y=(uz)^X A$ is a fundamental solution of the differential equation
\begin{equation}\label{picard-fu}
\left( z\frac{d}{dz} \right)^3 Y - z \left( z\frac{d}{dz}+\frac{1}{2} \right) \left( z\frac{d}{dz}+s \right) \left( z\frac{d}{dz}+1-s \right) Y=0.
\end{equation}
\end{proposition}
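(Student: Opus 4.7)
The plan is to verify directly that $LY = 0$, where $L = \theta^3 - z(\theta + 1/2)(\theta + s)(\theta + 1 - s)$ is the operator appearing in (\ref{picard-fu}) and $\theta = z\, d/dz$. Since $u = \pm 1$ is a constant, $(uz)^X = u^X z^X$, so
\[ Y = u^X \sum_{n=0}^{\infty} P_n(s, X)\, z^{n+X}. \]
The key observation is that $\theta z^{n+X} = (nI + X) z^{n+X}$. Thus $\theta^3$ applied to $Y$ introduces a factor $(nI + X)^3$ in each summand, while applying $z(\theta + 1/2)(\theta + s)(\theta + 1 - s)$ and shifting the index $n \mapsto n-1$ produces terms with the factor $((n - 1/2)I + X)((n - 1 + s)I + X)((n - s)I + X)$ multiplying $P_{n-1}(s, X)$.

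The first step is therefore to establish, for $n \geq 1$, the coefficient identity
\[ (nI + X)^3\, P_n(s, X) = ((n - 1/2)I + X)((n - 1 + s)I + X)((n - s)I + X)\, P_{n-1}(s, X). \]
This follows immediately from the Pochhammer recurrence $(aI + X)_n = (aI + X)_{n-1} \cdot ((a + n - 1)I + X)$, applied to each of the three numerator factors of $P_n(s, X)$ and to the factorization $(I + X)_n^3 = (I + X)_{n-1}^3 (nI + X)^3$ in the denominator; the $(nI + X)^3$ on the left cancels the same factor appearing in $(I + X)_n^3$, leaving exactly the right-hand side. The $n = 0$ term requires a separate argument: $\theta^3(u^X z^X) = u^X X^3 z^X = 0$ because $X$ is nilpotent of order $3$, and the shifted series contributes nothing at $n = 0$; so both sides vanish. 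Combined, these checks give $LY = 0$.

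To conclude that $Y$ is a \emph{fundamental} solution, I would examine the three components $y_0, y_1, y_2$ listed in (\ref{y0y1y2}). Since $P_0(s, X) = I$, we have $a_0 = 1 + O(z)$ while $a_1, a_2 = O(z)$ near $z = 0$; hence $y_0$ is analytic at the origin, $y_1 = \ln(uz) + O(1)$, and $y_2 = \frac{1}{2} \ln^2(uz) + O(\ln(uz))$. The leading behaviors $1$, $\ln(uz)$, $\ln^2(uz)$ are linearly independent functions, so $y_0, y_1, y_2$ span the three-dimensional solution space of the third-order operator $L$.

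The computation is essentially routine; the only subtlety is that one must invoke commutativity of the matrix factors when rearranging the numerator and denominator of $P_n$. This is fully justified because every matrix appearing in the argument is a polynomial in the single nilpotent matrix $X$, so all these factors commute, and the index-shift manipulation that matches $\theta^3 Y$ against $z(\theta + 1/2)(\theta + s)(\theta + 1 - s) Y$ is legitimate.
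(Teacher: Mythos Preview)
Your proof is correct and follows essentially the same route as the paper's: both arguments substitute $Y=(uz)^X A$ into the operator, use $\theta$ acting on $(uz)^X z^n$ to produce the factor $(nI+X)$, and then match coefficients via the Pochhammer recurrence, with the surviving $n=0$ contribution reducing to $X^3=0$ by nilpotency. Your write-up is in fact a bit more thorough than the paper's---you treat general $s$ (the paper spells out only $s=1/2$ and declares the rest ``similar'') and you explicitly justify the word \emph{fundamental} by exhibiting the linearly independent leading behaviours $1,\ \ln(uz),\ \ln^2(uz)$ of $y_0,y_1,y_2$, which the paper leaves implicit. One small notational caveat: the factorization $(uz)^X=u^X z^X$ can be branch-sensitive when $u=-1$; it is cleaner (and what the paper does) to keep $(uz)^X$ intact and use $\theta\,(uz)^X=X\,(uz)^X$ directly, but this does not affect the substance of your argument.
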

\begin{proof}
We give details for the case $s=1/2$, but the other cases are similar. Writing
\[ A=\sum_{n=0}^{\infty} A_n z^n, \quad {\rm where} \quad A_n=\frac{\left( \frac{1}{2} I + X \right)_n^3}{( I + X )_n^3}, \]
we have the recurrence
\begin{equation}\label{recu1}
\left[ \frac{}{} (n+1)I+X \right]^3 A_{n+1}=\left[ \left(n+\frac{1}{2} \right)I+X \right]^3 A_n.
\end{equation}
If we substitute $Y=(uz)^X A$ in the left-hand side of (\ref{picard-fu}), we obtain
\[
(uz)^X \sum_{n=0}^{\infty} A_n
\left( \frac{}{} nI+X \right)^3 z^n-(uz)^X \sum_{n=0}^{\infty} A_n \left[ \left( n+\frac{1}{2} \right)I+X \right]^3 z^{n+1},
\]
and using (\ref{recu1}), we see that it is equal to $(uz)^X X^3,$ which is the zero matrix of order $3$.
\end{proof}
\begin{proposition}\label{propo-picard-1-2}{\rm (Clausen's identity \cite[p. 178]{borweinagm})}
If $y$ is a solution of the differential equation
\begin{equation}\label{picard1}
\left( z\frac{d}{dz} \right)^2 y - z \left( z\frac{d}{dz}+\frac{s}{2} \right)
\left( z\frac{d}{dz}+\frac{1-s}{2} \right) y=0,
\end{equation}
then $c y^2$, for an arbitrary constant $c$, is a solution of the differential equation
\begin{equation}\label{picard2}
\left( z\frac{d}{dz} \right)^3 y - z \left( z\frac{d}{dz}+\frac{1}{2} \right) \left( z\frac{d}{dz}+s \right) \left( z\frac{d}{dz}+1-s \right) y=0.
\end{equation}
\end{proposition}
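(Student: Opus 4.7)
The plan is to pass to the Euler operator $\theta = z\,d/dz$ and verify the identity by a direct computation. Expanding $(\theta + s/2)(\theta + (1-s)/2) = \theta^2 + \frac{1}{2}\theta + \frac{s(1-s)}{4}$, equation (\ref{picard1}) becomes
\[ (1-z)\theta^2 y = z\left[\tfrac{1}{2}\theta y + \tfrac{s(1-s)}{4}\, y\right]. \qquad (\ast) \]
Similarly, expanding $(\theta + 1/2)(\theta + s)(\theta + 1-s) = \theta^3 + \frac{3}{2}\theta^2 + (s(1-s) + \frac{1}{2})\theta + \frac{s(1-s)}{2}$, equation (\ref{picard2}) for $u = c y^2$ is equivalent to
\[ (1-z)\theta^3 u = z\left[\tfrac{3}{2}\theta^2 u + (s(1-s) + \tfrac{1}{2})\theta u + \tfrac{s(1-s)}{2}\, u\right]. \qquad (\ast\ast) \]
Since (\ref{picard2}) is linear and homogeneous, the constant $c$ plays no role, so it suffices to prove $(\ast\ast)$ for $u = y^2$.

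Next I would expand the $\theta$-derivatives of $y^2$ by the Leibniz rule, namely $\theta(y^2) = 2 y\,\theta y$, $\theta^2(y^2) = 2(\theta y)^2 + 2 y\, \theta^2 y$, and $\theta^3(y^2) = 6(\theta y)(\theta^2 y) + 2 y\, \theta^3 y$. Substituting these into $(\ast\ast)$ yields a relation involving $y$, $\theta y$, $\theta^2 y$, and $\theta^3 y$. I would then apply $\theta$ to $(\ast)$, obtaining an expression for $(1-z)^2 \theta^3 y$ as a polynomial in $y$, $\theta y$, and $\theta^2 y$ with coefficients in $\mathbb{Q}[s,z]$; combined with $(\ast)$ itself, this eliminates $\theta^2 y$ and $\theta^3 y$ completely.

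After clearing a factor of $(1-z)^2$, both sides of $(\ast\ast)$ become quadratic forms in $y$ and $\theta y$ with coefficients in $\mathbb{Q}[s,z]$, and the claim reduces to matching the coefficients of $y^2$, $y\cdot\theta y$, and $(\theta y)^2$ on the two sides. This coefficient matching is the main -- and essentially only -- obstacle: it is purely mechanical bookkeeping, carrying no further conceptual content beyond the definitions. The specific constants $\frac{1}{2}$, $s$, $1-s$ in (\ref{picard2}) are precisely those needed for all three coefficient comparisons to succeed, which is exactly the content of Clausen's identity.
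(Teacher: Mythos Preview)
Your proposal is correct and follows essentially the same approach as the paper's proof: use the second-order equation to express the higher derivatives of $y$ in terms of $y$ and its first derivative, substitute $cy^2$ into the third-order equation, and verify that everything cancels. The only cosmetic difference is that you work with the Euler operator $\theta=z\,d/dz$ while the paper phrases the reduction in terms of ordinary derivatives $y',y'',y'''$; since both equations are already written in $\theta$-form, your choice is arguably the more natural one, and your write-up is considerably more explicit than the paper's one-sentence sketch.
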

\begin{proof}
Equation (\ref{picard1}) relates $y''$ to $y$ and $y'$, and therefore it also relates $y'''$ to $y$ and $y'$; just substitute these relations into (\ref{picard2}) to see that everything cancels.
\end{proof}

\subsection{The components of $A$ and $B$}\label{sec-ele}
We will find relations among the components of the matrices $A$ and $B$.
\begin{lemma}
The components of the matrix $B$ are related to those of $A$ in the way
\begin{equation}\label{b11b12b13}
b_{0}=za_{0}', \qquad b_{1}=a_{0}+za_{1}', \qquad b_{2}=a_{1}+za_{2}'.
\end{equation}
\end{lemma}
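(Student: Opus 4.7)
The plan is to reduce the identity to a single matrix equation $B = zA' + XA$ and then read off components after expanding $A = a_0 I + a_1 X + a_2 X^2$ (which is valid because $X$ has order $3$).

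First I would observe that differentiating the series for $A$ term by term gives
\[
z \frac{dA}{dz} = \sum_{n=0}^{\infty} n z^n P_n(s, X),
\]
so, adding $X A = \sum_{n=0}^\infty z^n P_n(s,X)\,X$, one obtains the clean identity
\[
B = zA' + XA.
\]
This is the crucial reformulation, and once it is in hand everything else is bookkeeping.

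Next I would expand $A$ in the basis $\{I, X, X^2\}$; since $X^3 = 0$, the matrices $zA'$ and $XA$ become
\[
zA' = z a_0' \, I + z a_1' \, X + z a_2' \, X^2, \qquad
XA = a_0 \, X + a_1 \, X^2 + a_2 \, X^3 = a_0 \, X + a_1 \, X^2.
\]
Adding these and matching coefficients against $B = b_0 I + b_1 X + b_2 X^2$ yields exactly (\ref{b11b12b13}).

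There isn't really a hard step here; the only subtlety is the tacit use of the nilpotency $X^3 = 0$ to discard the $a_2 X^3$ term in $XA$, which is what makes the formula for $b_2$ truncate at $a_1 + za_2'$ rather than continue with an $a_2$-contribution. Termwise differentiation of the power series is justified inside the radius of convergence, so no analytic issue arises.
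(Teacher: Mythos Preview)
Your proof is correct and follows essentially the same approach as the paper: establish the matrix identity $B = zA' + XA$, then expand in the basis $\{I, X, X^2\}$ using $X^3 = 0$ and read off the components. The paper's argument is more terse but identical in substance.
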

\begin{proof}
The matrix $B$ can be written as
\[ B=z A'+XA=za_{0}' I+ (a_{0}+za_{1}')X+(a_{1}+za_{2}')X^2, \]
whose components imply (\ref{b11b12b13}).
\end{proof}
\begin{lemma}
The components of the matrices $A$ and $B$ satisfy the following relations:
\begin{equation}\label{3F2relacion1}
a_{1}^2=2 a_{0} a_{2},
\end{equation}
\begin{equation}\label{3F2relacion2}
a_1b_1-a_0b_2-a_2b_0=0,
\end{equation}
\begin{equation}\label{3F2relacion3}
b_1^2-2b_0b_2=\frac{1}{1-z}.
\end{equation}
\end{lemma}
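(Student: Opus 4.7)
My plan is to derive all three relations from Clausen's identity combined with the Frobenius analysis of (\ref{picard1}). By Proposition \ref{propo1} the entries $y_0,y_1,y_2$ of $Y=(uz)^X A$ given in (\ref{y0y1y2}) solve (\ref{picard2}), and by Proposition \ref{propo-picard-1-2} the space of solutions of (\ref{picard2}) is the symmetric square of that of (\ref{picard1}). I will take the Frobenius basis at $z=0$: let $f_0$ be the analytic solution of (\ref{picard1}) with $f_0(0)=1$, and let $f_1=f_0\ln(uz)+h$ be its logarithmic companion with $h$ analytic and $h(0)=0$. Reading off the coefficients of $\ln^2(uz)$, $\ln(uz)$ and $1$ in (\ref{y0y1y2}) and comparing with the expansions of $f_0^2$, $f_0 f_1$, $f_1^2$ forces
\[ y_0=f_0^2, \qquad y_1=f_0 f_1, \qquad y_2=\tfrac{1}{2}f_1^2, \]
so that $a_0=f_0^2$, $a_1=f_0 h$ and $a_2=h^2/2$ (the multiples of $f_0^2$ that the log-structure leaves free are killed by the trivial initial conditions $a_1(0)=a_2(0)=0$, which hold because every Pochhammer reduces to $1$ at $n=0$).

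With this in hand, the identity $y_1^2=2y_0 y_2$ is automatic, and substituting (\ref{y0y1y2}) the $\ln^2(uz)$ and $\ln(uz)$ terms cancel, leaving (\ref{3F2relacion1}). For (\ref{3F2relacion2}) I will substitute (\ref{b11b12b13}) into the left side to obtain $z(a_1 a_1'-a_0 a_2'-a_0' a_2)$, which vanishes upon differentiating (\ref{3F2relacion1}).

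For (\ref{3F2relacion3}) the plan is to substitute (\ref{b11b12b13}) together with $a_0=f_0^2$, $a_1=f_0 h$, $a_2=h^2/2$ into $b_1^2-2b_0 b_2$ and observe that the expression telescopes to the perfect square $[f_0^2+z(f_0 h'-f_0' h)]^2=(zW)^2$, where $W=f_0 f_1'-f_0' f_1$ is the Wronskian of the two solutions of (\ref{picard1}). Rewriting (\ref{picard1}) in standard form as $z(1-z)y''+(1-\tfrac{3}{2}z)y'-\tfrac{s(1-s)}{4}y=0$, Abel's formula yields $W=C/[z\sqrt{1-z}]$; since $zW=f_0^2+z(f_0 h'-f_0' h)\to 1$ as $z\to 0$, we must have $C=1$, whence $b_1^2-2b_0 b_2=1/(1-z)$.

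The delicate part is the identification of $y_0,y_1,y_2$ with the specific symmetric functions of $f_0,f_1$: once the log-structure of (\ref{y0y1y2}) and the initial values $a_i(0)$ pin down the weights $(1,1,\tfrac{1}{2})$ and the function $h$, the rest is routine algebraic manipulation and the classical Wronskian calculation.
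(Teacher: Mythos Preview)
Your proof is correct and follows essentially the same route as the paper: identify $a_0,a_1,a_2$ with the symmetric square of the Frobenius basis of (\ref{picard1}) via Clausen's identity (the paper writes $\alpha_0,\alpha_1$ for your $f_0,h$), deduce (\ref{3F2relacion1}) immediately, obtain (\ref{3F2relacion2}) by differentiating (\ref{3F2relacion1}) and using (\ref{b11b12b13}), and prove (\ref{3F2relacion3}) by reducing $b_1^2-2b_0b_2$ to the square of the Wronskian of (\ref{picard1}) and applying Abel's formula. The only cosmetic difference is that the paper passes through ${y_1'}^2-2y_0'y_2'$ before translating back to the $b_i$'s, whereas you compute $b_1^2-2b_0b_2=(zW)^2$ directly; both computations are equivalent since $zy_j'$ differs from $b_j$ only by log-terms that cancel in the quadratic form.
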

\begin{proof}
We first define the matrix
\[ \sum_{n=0}^{\infty} z^n \frac{\left( \frac{s}{2}I+X \right)_n \left( \frac{1-s}{2}I+X \right)_n }{(I+X)_n^2}=\alpha_{0}I+\alpha_{1}X, \]
where $X$ is a nilpotent matrix of order $2$. As in Proposition \ref{propo1}, we can prove that the functions $y=\alpha_{0}$ and $y=\alpha_{0} \ln (uz)+\alpha_{1}$ are linearly independent solutions of (\ref{picard1}). Then, by Proposition \ref{propo-picard-1-2}, we have that the functions $y=c \alpha_{0}^2$ are solutions of (\ref{picard2}) and this implies the relation $a_{0}=c \alpha_{0}^2$. Even more, as $a_{0}(0)=1$ and $\alpha_{0}(0)=1$, we get $a_{0}=\alpha_{0}^2$.
In the same way, as the function $y=\alpha_{0} \ln (uz)+\alpha_{1}$ is a solution of (\ref{picard1}), we have that the functions
\[
y=c \left( \alpha_{0}^2 \ln^2 (uz)+ 2 \alpha_{0} \alpha_{1} \ln (uz)+ \alpha_{1}^2 \right)
\]
are solutions of (\ref{picard2}).
As $a_{0}=\alpha_{0}^2$, comparing with
\[
y=a_{0} \frac{\ln^2 (uz)}{2}+a_{1} \ln (uz)+a_{2},
\]
we get the identities
\begin{equation}\label{a-alfa}
a_{0}=\alpha_{0}^2, \qquad a_{1}=\alpha_{0} \alpha_{1}, \qquad a_{2}=\frac{1}{2} \alpha_{1}^2.
\end{equation}
These relations imply (\ref{3F2relacion1}). To prove (\ref{3F2relacion2}), we substitute the three identities from (\ref{b11b12b13}) into it. Then, we use (\ref{3F2relacion1}) to obtain expressions for $a_2$ and $a_2'$ which allows us to complete the proof. To derive (\ref{3F2relacion3}), we first observe that (\ref{a-alfa}) imply that
\begin{equation}\label{rel-gy}
y_0=g_0^2, \qquad y_1=g_0g_1, \qquad y_2=\frac{1}{2}g_1^2,
\end{equation}
where $g_0$ and $g_1$ are the fundamental solutions of (\ref{picard1}). Differentiating we get
\begin{equation}\label{rel-diff-gy}
y'_0=2g_0g'_0, \qquad y'_1=g'_0g_1+g_0g'_1, \qquad y'_2=g_1g'_1,
\end{equation}
and
\begin{equation}\label{yp0yp1yp2}
{y'_1}-2y'_0y'_2=(g'_0g_1-g_0g'_1)^2=
\left|
  \begin{array}{cc}
    g_{0}  & g_{1}  \\
    g'_{0} & g'_{1} \\
  \end{array}
\right|^2.
\end{equation}
The function $\Phi$ and its derivative,
\begin{equation}\label{3F2-fun-Phi}
\Phi=\left|
  \begin{array}{cc}
    g_{0}  & g_{1}   \\
    g'_{0} &  g'_{1} \\
  \end{array}
\right|, \qquad
\Phi'=\left|
  \begin{array}{cc}
    g_{0}   & g_{1}   \\
    g''_{0} & g''_{1} \\
  \end{array}
\right|,
\end{equation}
satisfy
\[ \Phi'(z)=\frac{3z-2}{2z(1-z)}\Phi(z), \]
since the differential equation (\ref{picard2}) can be given in the form
\begin{equation}\label{picard2-yprimes}
g''+\frac{2-3z}{2z(1-z)}g'-\frac{s(1-s)}{4z(1-z)}g=0.
\end{equation}
Solving the equation for $\Phi(z)$, we obtain
\[ \Phi(z)=\frac{h}{z\sqrt{1-z}}, \]
where $h$ is a constant. Substituting $\Phi(z)$ in (\ref{yp0yp1yp2}) and, using (\ref{b11b12b13}), we obtain
\[ b_1^2-2b_0b_2=\frac{h^2}{1-z}. \]
Finally, substituting $z=0$, we find that $h^2=1$.
\end{proof}
\begin{lemma}
Let $M_0$, $M_1$ and $M_2$ be the determinants
\[
M_{0}=\left|
  \begin{array}{cc}
    a_{1} & b_{1} \\
    a_{2} & b_{2} \\
  \end{array}
\right|, \qquad
M_{1}=\left|
  \begin{array}{cc}
    a_{0} & b_{0} \\
    a_{2} & b_{2} \\
  \end{array}
\right|, \qquad
M_{2}=\left|
  \begin{array}{cc}
    a_{0} & b_{0} \\
    a_{1} & b_{1} \\
  \end{array}
\right|.
\]
Then we have
\begin{equation}\label{M0a2M1a1M2a0}
M_0=\frac{a_2}{\sqrt {1-z}}, \qquad M_1=\frac{a_1}{\sqrt {1-z}}, \qquad M_2=\frac{a_0}{\sqrt {1-z}}.
\end{equation}
\end{lemma}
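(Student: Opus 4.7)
My plan is to reduce each minor to the Wronskian $\Phi = g_0 g_1' - g_0' g_1 = 1/(z\sqrt{1-z})$ of the fundamental solutions of (\ref{picard1}), exploiting the factored forms $y_0 = g_0^2$, $y_1 = g_0 g_1$, $y_2 = \tfrac12 g_1^2$ already established in (\ref{rel-gy}). The key observation is that once one passes from the pairs $(a_i, b_i)$ to $(y_i, z y_i')$, everything linearises through $\Phi$.

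Setting $L := \ln(uz)$, the identity $Y = (uz)^X A$ inverts to give $a_0 = y_0$, $a_1 = y_1 - y_0 L$ and $a_2 = y_2 - y_1 L + \tfrac12 y_0 L^2$. A parallel computation, starting from $B = zA' + XA$ and the derived relation $(uz)^X B = zY'$, produces $b_0 = z y_0'$, $b_1 = z y_1' - z y_0' L$ and $b_2 = z y_2' - z y_1' L + \tfrac12 z y_0' L^2$.

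Substituting these into the three minors and simplifying gives the triangular system
\begin{align*}
M_2 &= z(y_0 y_1' - y_1 y_0'),\\
M_1 &= z(y_0 y_2' - y_2 y_0') - L\,M_2,\\
M_0 &= z(y_1 y_2' - y_2 y_1') - L\,M_1 - \tfrac12 L^2\,M_2.
\end{align*}
From (\ref{rel-gy})--(\ref{rel-diff-gy}) one reads off directly $y_0 y_1' - y_1 y_0' = g_0^2\,\Phi = y_0\,\Phi$, and analogously $y_0 y_2' - y_2 y_0' = y_1\,\Phi$ and $y_1 y_2' - y_2 y_1' = y_2\,\Phi$.

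Hence $M_2 = z y_0 \Phi = a_0/\sqrt{1-z}$, and then back-substitution yields $M_1 = z\Phi(y_1 - y_0 L) = z\Phi\,a_1 = a_1/\sqrt{1-z}$ and $M_0 = z\Phi(y_2 - y_1 L + \tfrac12 y_0 L^2) = z\Phi\,a_2 = a_2/\sqrt{1-z}$. The only delicate point is tracking the log-powers in the expansion of $M_0$, where the $L$ and $L^2$ contributions from both factors $a_1 b_2$ and $a_2 b_1$ must assemble into exactly $L\,M_1 + \tfrac12 L^2 M_2$; this cancellation pattern is forced by the triangular form of the change of variables, so no additional input is needed beyond the identities listed in Subsection~\ref{sec-ele}.
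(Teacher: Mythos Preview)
Your argument is correct and takes a genuinely different route from the paper's. The paper proceeds purely algebraically: it squares each minor, eliminates $b_1^2$ via (\ref{3F2relacion3}), eliminates $a_2$ and $b_2$ via (\ref{3F2relacion1}) and (\ref{3F2relacion2}), and checks $M_1^2=2M_0M_2$ in the same fashion; the final identities are therefore obtained only up to sign, with the sign fixed implicitly by continuity at $z=0$. You instead go back to the Clausen factorisation (\ref{rel-gy}) and reduce all three minors directly to $z\Phi$ times $a_0$, $a_1$, $a_2$, which is cleaner and explains \emph{why} the same factor $1/\sqrt{1-z}$ appears in all three formulas. The paper's approach buys a self-contained algebraic derivation from the quadratic relations (\ref{3F2relacion1})--(\ref{3F2relacion3}) alone; yours buys transparency and a uniform mechanism via the Wronskian. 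One small point: you invoke $z\Phi=1/\sqrt{1-z}$, whereas the preceding lemma only established $h^2=1$; the sign $h=1$ follows at once from $M_2(0)=a_0(0)b_1(0)-a_1(0)b_0(0)=1$, and you may want to say so explicitly.
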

\begin{proof}
To prove the identities for $M_0$ and $M_2$, we evaluate $M_0^2$ and $M_2^2$ and use (\ref{3F2relacion3}) to write $b_1^2$ as a function of $b_0$ and $b_2$. Then, we use (\ref{3F2relacion1}) and (\ref{3F2relacion2}) to write $a_2$ and $b_2$ as functions of $a_0$, $a_1$, $b_0$, $b_1$. To derive the identity for $M_1$, we prove that $M_1^2=2M_0M_2$ proceeding in the same way.
\end{proof}

\subsection{The equations for $z$, $a$ and $b$}
Write the matrices in (\ref{sistema1}) as the system
\begin{equation}\label{3ecu-zab}
  \begin{array}{cc}
    a_{0}a + b_{0}b =m_{0}, \\
    a_{1}a + b_{1}b =m_{1}, \\
    a_{2}a + b_{2}b =m_{2}. \\
  \end{array}
\end{equation}
As we want it to be compatible, we require
\begin{equation}\label{compatible1}
\left|
  \begin{array}{ccc}
    a_{0} & b_{0} & m_{0} \\
    a_{1} & b_{1} & m_{1} \\
    a_{2} & b_{2} & m_{2} \\
  \end{array}
\right|=0.
\end{equation}
Expanding the determinant along the last column, we obtain
\begin{equation}\label{deter2}
m_{0}M_{0}-m_{1}M_{1}+m_{2}M_{2}=0,
\end{equation}
which is an equation relating $z$ and $k$. If we define $H_0=M_{0}/M_{2}$ and $H_1=M_{1}/M_{2}$, multiply (\ref{deter2}) by $2m_0/M_2$ and define
\begin{equation}\label{tau-m0m1m2}
\tau^2=m_1^2-2m_0m_2,
\end{equation}
then equation (\ref{deter2}) takes the form
\begin{equation}\label{z-tau-ecu}
2m_0^2H_0-2m_0m_1H_1+m_1^2=\tau^2.
\end{equation}
From (\ref{M0a2M1a1M2a0}) and (\ref{3F2relacion1}) we see that
\begin{equation}
H_1=\frac{a_{1}}{a_{0}}, \qquad H_0=\frac{a_{1}^2}{2a_{0}^2},
\end{equation}
which imply that
\begin{equation}\label{3F2-H0H1-M0M1M2}
2H_0=H_1^2.
\end{equation}
This relation allows us to express equation (\ref{z-tau-ecu}) in the form
\begin{equation}
(m_1-m_0H_1)^2=\tau^2,
\end{equation}
which can be simplified to
\begin{equation}\label{ecu-simplif}
H_1-\frac{m_1}{m_0}=\frac{-\tau}{m_0}.
\end{equation}
We define the $q$-parametrization
\begin{equation}\label{qH2}
\ln(uq) = \ln(uz)+H_1-\nu_0 =\frac{-\tau}{m_0},
\end{equation}
where
\begin{equation}\label{3F2-nu0}
\nu_0=\frac{m_1}{m_0}+\ln(uz).
\end{equation}
From the definition of the matrix $M$ we derive
\begin{equation}\label{rel-eme1}
m_0=\frac{1}{\pi}, \qquad \nu_0=(\gamma+\Psi(s)-\ln 2) + (\gamma+\Psi(1-s)-\ln 2)
\end{equation}
and
\begin{equation}\label{rel-tau}
\tau=\sqrt{m_1^2-2m_0m_2}=\sqrt{k+1+ \cot^2 \pi s},
\end{equation}
where $\Psi(s)$ is the logarithmic derivative of the gamma function and $\gamma=-\Psi(1)$ is Euler's constant.
Exponentiating (\ref{qH2}) and using (\ref{rel-eme1}) and (\ref{rel-tau}), we obtain
\begin{equation}\label{todos-qz}
q=e^{-\nu_0} ze^{H_1}, \qquad q=u e^{-\pi \tau},
\end{equation}
which can be inverted to obtain $z$ as a function of $q$.
\begin{theorem}
The following formulas hold:
\begin{equation}\label{taubz}
b=\tau \sqrt{1-z},
\end{equation}
\begin{equation}\label{sol-a-y0}
y_0=\frac{q}{z \sqrt{1-z}} \cdot \frac{dz}{dq}, \qquad
a=\frac{1}{y_{0}} \left( \frac{1}{\pi} - \tau \frac{q}{y_{0}} \cdot \frac{dy_{0}}{dq} \right),
\end{equation}
\begin{equation}\label{ecu-zq-y0zq}
\frac{q}{z \sqrt{1-z}} \cdot \frac{dz}{dq} =\, _3F_2 \left(
\begin{array}{ccc}
\frac{1}{2},   & s, & 1-s \\
               & 1, & 1
\end{array}
\right| \left. z \frac{}{} \right)
\end{equation}
and (\ref{ecu-zq-y0zq}) has a unique solution $z(q)$.
\end{theorem}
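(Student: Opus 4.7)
The plan is to exploit the compatibility of the linear system (\ref{3ecu-zab}) together with the identities from Section \ref{sec-ele} and the $q$-parametrization (\ref{qH2}). For the formula for $b$, I would solve the first two equations of (\ref{3ecu-zab}) by Cramer's rule: the coefficient determinant equals $a_0 b_1 - a_1 b_0 = M_2 = a_0/\sqrt{1-z}$ by (\ref{M0a2M1a1M2a0}), and the numerator for $b$ is $a_0 m_1 - a_1 m_0 = a_0(m_1 - H_1 m_0)$. Since (\ref{ecu-simplif}) rearranges to $m_1 - H_1 m_0 = \tau$, this immediately yields $b = \tau\sqrt{1-z}$.

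For the formula for $y_0$, note that $y_0 = a_0 = \alpha_0^2$ by (\ref{a-alfa}), so the task reduces to identifying $\alpha_0^2$ with $(q/(z\sqrt{1-z}))(dz/dq)$. I would start from $\ln(uq) = \ln(uz) + H_1 - \nu_0$ in (\ref{qH2}) and use $H_1 = \alpha_1/\alpha_0$. Differentiating with respect to $z$ and simplifying via the explicit expansion of the Wronskian $W = \alpha_0 \beta_0' - \alpha_0' \beta_0$ of the fundamental solutions $\alpha_0$ and $\beta_0 = \alpha_0\ln(uz) + \alpha_1$ of (\ref{picard1}) produces
\[
\frac{d\ln q}{dz} = \frac{W}{\alpha_0^2}.
\]
Abel's formula applied to the second order form (\ref{picard2-yprimes}) gives $W(z) = C/(z\sqrt{1-z})$, with $C = 1$ pinned down by the leading behaviour $W \sim 1/z$ at $z = 0$ (which follows from $\alpha_0(0) = 1$ and $\beta_0 \sim \ln(uz)$). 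Rearranging gives $dz/dq = \alpha_0^2\, z\sqrt{1-z}/q$, which is equivalent to the stated formula for $y_0$.

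For the formula for $a$, I would use the first equation $a_0 a + b_0 b = m_0 = 1/\pi$ together with $b_0 = z a_0'$ from (\ref{b11b12b13}) and the value of $b$ just obtained, giving $a = (1/\pi - \tau z\sqrt{1-z}\cdot a_0')/a_0$. The chain rule combined with the previous step yields $q(dy_0/dq)/y_0 = z\sqrt{1-z}\cdot a_0'$, which transforms this into the claimed expression. The hypergeometric identity (\ref{ecu-zq-y0zq}) follows at once by substituting $X = 0$ in the definition of $A$, which shows that $a_0 = {}_3F_2(\tfrac12, s, 1-s; 1, 1; z)$. Finally, uniqueness of $z(q)$ follows because (\ref{qH2}) writes $q = uz\, e^{H_1 - \nu_0}$ with $H_1$ analytic and $H_1(0) = 0$, so $q(z)$ is analytic near $z = 0$ with $q'(0) = u e^{-\nu_0} \neq 0$ and the local inversion theorem applies.

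The main obstacle will be the Wronskian computation in the second paragraph: tracking the $u$-sign inside the logarithm, expanding $\beta_0'$ correctly, and pinning down the integration constant in Abel's formula by matching the leading $1/z$ behavior at the origin. Everything else reduces to linear algebra built on top of the structural identities already developed in Section \ref{sec-ele}.
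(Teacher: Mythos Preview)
Your argument is correct, and its overall architecture matches the paper's: derive $b$ from the linear system and the identities of Section~\ref{sec-ele}, then get the $y_0$ formula from the $q$-parametrization, then read off $a$ from the first equation. The details, however, differ in two places.

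For $b=\tau\sqrt{1-z}$, the paper does not use Cramer's rule and (\ref{ecu-simplif}); instead it expands the quadratic form
\[
\tau^2=m_1^2-2m_0m_2=(a_1^2-2a_0a_2)a^2+(b_1^2-2b_0b_2)b^2+2(a_1b_1-a_0b_2-a_2b_0)ab
\]
and applies (\ref{3F2relacion1})--(\ref{3F2relacion3}) directly to obtain $\tau^2=b^2/(1-z)$. Your route is equally valid and in fact shorter, since (\ref{ecu-simplif}) and (\ref{M0a2M1a1M2a0}) are already on the page.

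For the identity $y_0=\dfrac{q}{z\sqrt{1-z}}\dfrac{dz}{dq}$, the paper proceeds through the system: it rewrites (\ref{ecu2-ab}) using $y_1=y_0(\ln(uq)+\nu_0)$ to get (\ref{ecu3-ab}), and then combines the second equation of (\ref{ecu3-ab}) with the just-proved formula $b=\tau\sqrt{1-z}$ and $\ln(uq)=-\pi\tau$. Your approach instead differentiates $\ln(uq)=\ln(uz)+\alpha_1/\alpha_0-\nu_0$ and recognises the result as $W/\alpha_0^2$, where $W$ is the Wronskian of the two solutions of (\ref{picard1}), evaluated by Abel's formula applied to (\ref{picard2-yprimes}). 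This is a genuine alternative: it isolates the mirror-map identity as a statement purely about the second-order Picard--Fuchs equation, independent of the Ramanujan data $a,b$. (The paper has in effect already done this Abel computation in the proof of (\ref{3F2relacion3}), via the function $\Phi$ in (\ref{3F2-fun-Phi}), so it avoids repeating it here.) The remaining steps---the formula for $a$, the identification of $y_0$ with the ${}_3F_2$, and uniqueness by local inversion---agree with the paper.
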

\begin{proof}
If we use the identities (\ref{b11b12b13}), (\ref{3F2relacion1}), (\ref{3F2relacion2}) and (\ref{3F2relacion3}) in the calculation
\begin{align}
\tau^2=m_1^2-2m_0m_2 & =(a_1a+b_1b)^2-2(a_0a+b_0b)(a_2a+b_2b) \nonumber \\
& =(a_1^2-2a_0a_2)a^2+(b_1^2-2b_0b_2)b^2+2(a_1b_1-a_0b_2-a_2b_0)a b, \nonumber
\end{align}
we obtain the relation (\ref{taubz}). As $y_{0}=a_{0}$ and $y_{1}=a_{0} \ln (uz)+a_{1}$, from (\ref{3ecu-zab}) we can write
\begin{equation}\label{ecu2-ab}
y_{0}a + zy_{0}' b = \frac{1}{\pi}, \qquad y_{1}a + zy_{1}' b = \frac{\nu_0}{\pi}.
\end{equation}
Using (\ref{qH2}), we get the relation
\begin{equation}\label{yq}
y_{1}=y_{0}(\ln (uq)+\nu_0),
\end{equation}
which together with (\ref{ecu2-ab}), allows us to obtain the equations
\begin{equation}\label{ecu3-ab}
y_{0}a + zy_{0}' b = \frac{1}{\pi}, \qquad \left( y_{0} \ln (uq) \right) a + z \left( y_{0}\ln (uq) \right)' b = 0.
\end{equation}
From (\ref{ecu3-ab}) and (\ref{taubz}), we deduce (\ref{sol-a-y0}). Finally, as
\[
y_0=\sum_{n=0}^{\infty} \frac{\left( \frac{1}{2} \right)_n^3}{(1)_n^3} z^n(q)=\, _3F_2 \left(
\begin{array}{ccc}
\frac{1}{2},   & s, & 1-s \\
               & 1, & 1
\end{array}
\right| \left. z \frac{}{} \right)
\]
the function $z(q)$ satisfies the equation (\ref{ecu-zq-y0zq}). To see that the solution is unique, we expand $dq/q$ in powers of $z$. Integrating term by term, exponentiating and expanding again in powers of $z$, we obtain $q(z)$ as a power series in $z$. Inverting it, we obtain a unique function $z(q)$.
\end{proof}

\subsection{Algebraic and rational values}
We have prepared the way to prove some theorems related to Expansion \ref{conje1}.
\begin{theorem}
If $k$ is rational then $z$, $a$ and $b$ are algebraic.
\end{theorem}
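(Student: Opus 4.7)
The plan is to reduce to classical results on singular values of modular functions and forms.

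First I would observe that $\tau^{2}=k+1+\cot^{2}\pi s$ is rational, since $k$ is rational and, for each admissible $s\in\{1/2,1/3,1/4,1/6\}$, $\cot^{2}\pi s\in\{0,\tfrac13,1,3\}$. Hence $\tau$ is algebraic (in fact at most quadratic over $\mathbb{Q}$). The second identity of (\ref{todos-qz}) gives $q=ue^{-\pi\tau}$, so writing $q=e^{2\pi i\tau_{*}}$ one has $\tau_{*}=iu\tau/2$ with $\tau_{*}^{2}\in\mathbb{Q}_{<0}$; that is, $\tau_{*}$ is a CM point in the upper half-plane.

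Next I would identify the function $z(q)$ inverting (\ref{ecu-zq-y0zq}) with a Hauptmodul for the congruence subgroup $\Gamma\subset\mathrm{PSL}(2,\mathbb{R})$ associated, via Schwarz's list, to the $_{2}F_{1}$ triple attached to each $s$; equivalently $z(q)$ uniformises a one-parameter family of elliptic curves with level structure. The classical theory of complex multiplication then gives that $z(q)$ takes algebraic values at CM points, so $z$ is algebraic. Combined with (\ref{taubz}), this already yields that $b=\tau\sqrt{1-z}$ is algebraic.

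For $a$ I would use the first relation in (\ref{ecu2-ab}), namely $y_{0}a+zy_{0}'b=1/\pi$, rewritten as
\[
a \;=\; \frac{1}{\pi y_{0}} \;-\; \frac{z y_{0}'}{y_{0}}\,b.
\]
By Clausen's identity (Proposition \ref{propo-picard-1-2}), $y_{0}=g_{0}^{2}$ where $g_{0}$ is a weight-$1$ modular form for $\Gamma$; hence $y_{0}$ is of weight $2$. At the CM point $\tau_{*}$ the Chowla-Selberg/Ramanujan-Sato principle says that $\pi y_{0}$ (a value of a weight-$2$ Eisenstein-like form, rescaled by the natural transcendental period factor $\pi$) and the logarithmic derivative $zy_{0}'/y_{0}$ (which differs from a classical quasimodular form of weight $2$ by an algebraic multiple of the weight-$2$ Eisenstein series) are both algebraic. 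Plugging these into the displayed formula yields that $a$ is algebraic.

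The main obstacle is the last step: the algebraicity of $\pi y_{0}$ and of $zy_{0}'/y_{0}$ at the CM point is exactly the nontrivial content behind every known proof of a Ramanujan-type series, and rather than reproving it I would appeal to the established theory (for $s=1/2$ one can cite Chapter~5 of Borwein--Borwein \cite{borweinagm}, with the other three cases handled analogously via the signature-$3,4,6$ theories). Everything else is formal manipulation of the identities already established in Sections 2.1--2.3.
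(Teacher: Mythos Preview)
Your treatment of $z$ and $b$ is essentially the paper's: identify the $q$-parametrization as the modular one, invoke CM theory to get $z$ algebraic at the quadratic-irrational $\tau$, and read off $b=\tau\sqrt{1-z}$ from (\ref{taubz}). No issue there.

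The gap is in your argument for $a$. You claim that at the CM point both $\pi y_{0}$ and $zy_{0}'/y_{0}$ are algebraic, and then conclude from
\[
a=\frac{1}{\pi y_{0}}-\frac{zy_{0}'}{y_{0}}\,b
\]
that $a$ is algebraic. Neither piece is algebraic separately. Take $s=1/2$, $\tau=1$ (so $q=e^{-\pi}$): then $y_{0}=\theta_{3}^{4}(e^{-\pi})=\Gamma(1/4)^{4}/(4\pi^{3})$, hence $\pi y_{0}=\Gamma(1/4)^{4}/(4\pi^{2})$, which is transcendental by Chudnovsky's algebraic independence of $\pi$ and $\Gamma(1/4)$. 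Likewise $zy_{0}'/y_{0}$ (equivalently $q\,d\log y_{0}/dq$) is built from the holomorphic $E_{2}$, not its non-holomorphic completion $E_{2}^{*}$, so it is not algebraic at CM points either. What happens is that the transcendental contributions of the two terms cancel, and that cancellation \emph{is} the Ramanujan identity.

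The paper avoids this trap by not splitting the expression. It cites directly (\cite{chan2}, \cite{yangzudilin}, \cite[Sect.~3]{zudilin}) the existence, for each quadratic-irrational $\tau$, of an algebraic $\delta(\tau)$ with
\[
\frac{1}{\pi}=\delta(\tau)\,y_{0}+b\,z\,y_{0}',
\]
and then simply compares with the already-established identity $\tfrac{1}{\pi}=a\,y_{0}+b\,z\,y_{0}'$ to conclude $a=\delta(\tau)$. Equivalently, in the explicit $s=1/2$ case, this is the classical fact that the elliptic alpha function $\alpha(q)$ of (\ref{lambda-alpha}) is algebraic at singular moduli. Your fix is to cite this statement about the \emph{combination}, not the two summands.
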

\begin{proof}
As $H_1=a_{1}/a_{0}$, it is known that the $q$-parametrization used in (\ref{todos-qz}) is the modular one.
It is also known that if $\tau$ is a quadratic irrational then $z(\tau)$ is algebraic and therefore by (\ref{taubz}), we see that $b(\tau)$ is algebraic as well. In addition, \cite{chan2}, \cite{yangzudilin} and \cite[Sect. 3]{zudilin}, there exists an algebraic number $\delta(\tau)$, such that
\[ \frac{1}{\pi}=\delta(\tau)y_{0}(\tau)+b(\tau) z(\tau) y_{0}'(\tau), \]
where, as usual, the prime indicates that we derivate with respect to $z$. On the other hand, we have proved that
\[ \frac{1}{\pi}=a(\tau)y_{0}(\tau)+b(\tau) z(\tau) y_{0}'(\tau). \]
The comparison of the two identities gives $a(\tau)=\delta(\tau).$ We conclude that if $k$ is rational then $z(\tau)$, $b(\tau)$ and $a(\tau)$ are algebraic.
\end{proof}
\begin{theorem}
If $z$ and $b$ are algebraic, then $k$ is rational.
\end{theorem}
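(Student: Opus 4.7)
The plan is to reverse-engineer the preceding theorem. The starting point is identity (\ref{taubz}), rewritten as $\tau^2 = b^2/(1-z)$: since $z$ and $b$ are algebraic by hypothesis, $\tau^2$ is immediately algebraic. Comparing with (\ref{rel-tau}) gives $k = \tau^2 - 1 - \cot^2\pi s$, and $\cot^2\pi s$ is rational for every admissible Ramanujan value $s\in\{1/2,1/3,1/4,1/6\}$ (it equals $0$, $1/3$, $1$, $3$ respectively). This already shows that $k$ is algebraic, so the real content of the theorem is to sharpen ``algebraic'' to ``rational''.

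For that sharpening I would invoke the same ``known'' facts that were used in the proof of the preceding theorem, now in the reverse direction. By (\ref{todos-qz}) the $q$-parametrization $q = u e^{-\pi\tau}$ is the modular one, meaning that $z$ is the value at $\tau_H := i\tau/2$ of a classical modular function on a suitable congruence subgroup (whose level is controlled by $s$). Schneider's theorem on values of modular functions asserts that if the value at a non-real point is algebraic then the point must itself be imaginary quadratic. Applied here, $z$ algebraic forces $\tau_H$ to be imaginary quadratic. But by construction $\tau>0$ is real, so $\tau_H$ is purely imaginary; an imaginary quadratic number of the form $iy$ with $y>0$ real must satisfy $y^2\in\mathbb{Q}_{>0}$, whence $\tau^2 = 4y^2\in\mathbb{Q}$. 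Substituting into $k = \tau^2-1-\cot^2\pi s$ gives $k\in\mathbb{Q}$, as required.

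The main obstacle, and the reason the hypothesis ``$b$ algebraic'' is listed alongside ``$z$ algebraic'' rather than derived from it, is precisely this algebraic-to-rational step. The first half of the argument uses $b$ algebraic to reach $\tau^2\in\overline{\mathbb{Q}}$ by purely elementary manipulation of the Section~\ref{sec-ele} identities, so that the transcendence input is called upon only to upgrade algebraic to rational. In a fully written-out proof the delicate point is identifying $z(q)$ as the pullback of a modular function on a bona fide congruence subgroup, so that Schneider's theorem genuinely applies; this is the same unproved ``modular nature'' that was quoted in the preceding theorem, here used in the converse direction.
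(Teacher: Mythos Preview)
Your argument is correct and matches the paper's proof essentially line for line: both use $b=\tau\sqrt{1-z}$ from (\ref{taubz}) to conclude that $\tau$ is algebraic, then invoke Schneider's theorem on the modular function $z(q)$ to force the modular parameter to be imaginary quadratic, whence $\tau^2\in\mathbb{Q}$ and $k\in\mathbb{Q}$ via (\ref{rel-tau}). The paper's version is terser---it phrases Schneider as the dichotomy ``$i\tau$ is quadratic irrational or transcendental'' and uses (\ref{taubz}) to exclude the transcendental branch---but the logic is identical.
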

\begin{proof}
Since $z$ viewed as function of $i \tau$ is modular, it takes algebraic values if $i \tau$ is either a quadratic irrationality or transcendental by Schneider's theorem \cite{schneider}. The latter is impossible by (\ref{taubz}), hence $\tau^2$ is rational and so is $k$.
\end{proof}
The proofs of these theorems would have been more difficult without using the matrix equivalent of Expansion \ref{conje1} as can be specially appreciated in the developments of Sections \ref{sec-picard} and \ref{sec-ele}. On the other hand, the solutions for $z$, $a$ and $b$ provide nice matrix generalizations of the original Ramanujan's series for $1/\pi$.

\subsection{Solving the equations}
We find an explicit solution in the case $s=1/2$. Similar procedures leads to the explicit solutions in the cases $s=1/3$, $s=1/4$ and $s=1/6$.
We will use the Jacobi elliptic theta functions
\[
\theta_2(q)=\sum_{n=-\infty}^{\infty} q^{{(n+1/2)}^2}, \quad
\theta_3(q)=\sum_{n=-\infty}^{\infty} {q^{n^2}}, \quad
\theta_4(q)=\sum_{n=-\infty}^{\infty} (-1)^n{q^{n^2}},
\]
the elliptic lambda function and the elliptic alpha function
\begin{equation}\label{lambda-alpha}
\lambda (q)=\frac{\theta_2^4(q)}{\theta_3^4(q)}, \qquad \alpha(q)=\frac{1}{\theta_3^4(q)}
\left( \frac{1}{\pi} - 4 \tau \frac{q}{\theta_4(q)} \cdot \frac{d \theta_4(q)}{dq} \right),
\end{equation}
the formula (see \cite{borweinagm}, p. 35)
\begin{equation}\label{rel-thetas}
\theta_3^4(q)=\theta_2^4(q)+\theta_4^4(q)
\end{equation}
and the following two formulas (see \cite{borweinagm}, p. 42):
\begin{equation}\label{for-bor}
\frac{4q}{\theta_3(q)} \cdot \frac{d \theta_3(q)}{dq}-\frac{4q}{\theta_4(q)} \cdot
\frac{d \theta_4(q)}{dq}=\theta_2^4(q),
\end{equation}
\begin{equation}\label{for-bor2}
\frac{4q}{\theta_2(q)} \cdot \frac{d \theta_2(q)}{dq}-\frac{4q}{\theta_3(q)} \cdot
\frac{d \theta_3(q)}{dq}=\theta_4^4(q).
\end{equation}
For $s=1/2$ we have proved that $\tau=\sqrt{k+1}$.
Consider the function
\begin{equation}\label{z-sol-modular}
z(q)=4 \cdot \frac{\theta_2^4(q)}{\theta_3^4(q)} \cdot \frac{\theta_4^4(q)}{\theta_3^4(q)}. \end{equation}
Taking logarithms in (\ref{z-sol-modular}), differentiating with respect to $q$ and using (\ref{for-bor}), (\ref{for-bor2}), we see that
\begin{equation}\label{a-lambda-alpha}
\frac{q}{z \sqrt{1-z}} \cdot \frac{dz}{dq}=\theta_3^4(q).
\end{equation}
But it is well known \cite[p. 180]{borweinagm} that
\begin{equation}\label{ecu-ztheta-y0theta}
\theta_3^4(q) =\, _3F_2
\left(
\begin{array}{ccc}
\frac{1}{2},   & \frac{1}{2}, & \frac{1}{2} \\
               & 1,           & 1
\end{array}
\right|
\left. 4 \cdot \frac{\theta_2^4(q)}{\theta_3^4(q)} \cdot \frac{\theta_4^4(q)}{\theta_3^4(q)} \right).
\end{equation}
For a general method to prove identities like (\ref{ecu-ztheta-y0theta}) see \cite{yang}. So, (\ref{z-sol-modular}) is the unique solution of (\ref{ecu-zq-y0zq}).
Writing it with the function $\lambda(q)$ defined in (\ref{lambda-alpha}), we have
\begin{equation}\label{z-sol-lambda}
z(q)=4 \lambda(q)(1-\lambda(q)).
\end{equation}
Then, using the formulas (\ref{taubz}) and (\ref{z-sol-modular}), we find that
\begin{equation}\label{b-lambda}
b=\sqrt{k+1} \cdot (2 \lambda(q)-1).
\end{equation}
Finally, using (\ref{for-bor}), (\ref{sol-a-y0}) and the definitions in (\ref{lambda-alpha}), we obtain
\begin{equation}\label{a-lambda-alpha}
a=\alpha(q)-\sqrt{k+1} \cdot \lambda(q).
\end{equation}

We know, from the theory of elliptic modular functions, that for rational values of $k$ the functions $\lambda(q)$ and $\alpha(q)$ take algebraic values and, therefore, if $k$ is rational, then $z$, $a$ and $b$ are algebraic.

\section{Matrix form of  Expansion \ref{conje2}}\label{5F4}

We obtain a matrix equivalent of Expansion \ref{conje2} replacing $x$ with a nilpotent matrix $X$ of order $5$. This equivalence is a consequence of Lemma \ref{lema}. We introduce the notation
\[ P_n(s,t,X)=\frac{ \left( \frac{1}{2}I+X \right)_n (sI+X)_n (tI+X)_n ((1-t)I+X)_n ((1-s)I+X)_n}{ (I+X)_n^5}; \]
using the property $(s)_{n+x}=(s+x)_n(s)_x$ we obtain
\begin{expansion}[Expansion \ref{conje2} in matrix form]\label{matrix5F4}
\begin{multline}\nonumber
\sum_{n=0}^{\infty} z^n  P_n(s,t,X) \left( \frac{}{} \! aI+b(nI+X)+c(nI+X)^2 \right) \\ = (uz)^{-X} P_X^{-1}(s,t,0) \left(\frac{1}{\pi^2}-\frac{k}{2}X^2+\frac{j}{24}\pi^2 X^4 \right),
\end{multline}
where $X$ is an arbitrary nilpotent matrix of order $5$.
\end{expansion}
Denote
\[
A=\sum_{n=0}^{\infty} z^n P_n(s,t,X), \qquad B = \sum_{n=0}^{\infty} z^n P_n(s,t,X)(nI+X),
\]
\[
C=\sum_{n=0}^{\infty} z^n P_n(s,t,X)(nI+X)^2
\]
and
\[
M=(uz)^{-X} P_X^{-1}(s,t,0) \left( \frac{1}{\pi^2} I -\frac{k}{2} X^2 + \frac{j}{24} \pi^2 X^4 \right).
\]
Then Expansion \ref{matrix5F4} can be written in the form $aA+bB+cC=M$, which is equivalent to the (overdetermined) system
\begin{equation}\label{5ecuaciones}
\left(
  \begin{array}{ccc}
    a_{0} & b_{0} & c_{0} \\
    a_{1} & b_{1} & c_{1} \\
    a_{2} & b_{2} & c_{2} \\
    a_{3} & b_{3} & c_{3} \\
    a_{4} & b_{4} & c_{4} \\
  \end{array}
\right)
\left(
  \begin{array}{c}
    a \\
    b \\
    c \\
  \end{array}
\right)=
\left(
  \begin{array}{c}
    m_{0} \\
    m_{1} \\
    m_{2} \\
    m_{3} \\
    m_{4} \\
  \end{array}
\right).
\end{equation}

\subsection{Picard-Fuchs differential equation}\label{sec-pic}
We will see that the components of the matrix $Y=(uz)^X A$, namely
\begin{align}
y_0= & a_0, \nonumber \\
y_1= & a_0 \ln(uz)+a_1, \nonumber \\
y_2= & a_0 \frac{\ln^2(uz)}{2}+a_1 \ln(uz)+a_2, \label{ysub5F4} \\
y_3= & a_0 \frac{\ln^3(uz)}{6}+a_1 \frac{\ln^2(uz)}{2}+a_2 \ln(uz)+a_3, \nonumber \\
y_4= & a_0 \frac{\ln^4(uz)}{24}+a_1 \frac{\ln^3(uz)}{6}+a_2 \frac{\ln^2(uz)}{2}+a_3 \ln(uz)+a_4, \nonumber
\end{align}
satisfy a Picard-Fuchs differential equation by proving that the matrix $Y$ itself satisfies that equation.

\begin{proposition}\label{propo2}
The matrix $Y=(uz)^X A$ is a fundamental solution of the differential equation
\begin{equation}\label{pi-fu}
\left( z\frac{d}{dz} \right)^5 \! Y \! = \! z \left( z\frac{d}{dz}+\frac{1}{2} \right) \left( z\frac{d}{dz}+s \right) \left( z\frac{d}{dz}+t \right) \left( z\frac{d}{dz}+1-t \right) \left( z\frac{d}{dz}+1-s \right) Y.
\end{equation}
\end{proposition}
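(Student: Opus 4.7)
The plan is to mirror the proof of Proposition~\ref{propo1} using order-$5$ matrices and two extra Pochhammer factors (corresponding to $t$ and $1-t$). Writing $A = \sum_{n=0}^{\infty} A_n z^n$ with
\[
A_n = \frac{(\tfrac{1}{2}I+X)_n\,(sI+X)_n\,(tI+X)_n\,((1-t)I+X)_n\,((1-s)I+X)_n}{(I+X)_n^5},
\]
and using that all these factors are polynomials in the single matrix $X$, and therefore commute, I would read off the recurrence
\[
[(n+1)I+X]^5 A_{n+1} = \bigl[(n+\tfrac{1}{2})I+X\bigr]\bigl[(n+s)I+X\bigr]\bigl[(n+t)I+X\bigr]\bigl[(n+1-t)I+X\bigr]\bigl[(n+1-s)I+X\bigr]\,A_n
\]
directly from the definition of the matrix Pochhammer symbol.

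Next I would substitute $Y = (uz)^X A$ into (\ref{pi-fu}). Because $(z\,d/dz)(uz)^X = X(uz)^X$ and $(z\,d/dz)z^n = n z^n$, iteration gives $(z\,d/dz)^k Y = (uz)^X \sum_n (nI+X)^k A_n z^n$, and more generally $p(z\,d/dz)\,Y = (uz)^X \sum_n p(nI+X)\,A_n z^n$ for any polynomial $p$. Applied to the two sides of (\ref{pi-fu}), the left-hand side becomes $(uz)^X \sum_{n \ge 0} (nI+X)^5 A_n z^n$, while the right-hand side, after absorbing the outer factor $z$ into $z^n$ and shifting the summation index by one, becomes $(uz)^X \sum_{n \ge 1}(nI+X)^5 A_n z^n$ in view of the recurrence above. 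Their difference is the $n=0$ contribution $(uz)^X X^5 A_0 = (uz)^X X^5$, which vanishes since $A_0 = I$ and $X$ is nilpotent of order $5$.

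To conclude that $Y$ is a \emph{fundamental} solution I would verify linear independence of its components $y_0,\dots,y_4$ from~(\ref{ysub5F4}). From $A_0 = I$ we have $a_0(0) = 1$ and $a_i(0) = 0$ for $i \ge 1$, so $y_i$ has leading behaviour $\ln^i(uz)/i!$ near $z = 0$; these five distinct log-degrees force linear independence, and the $y_i$ therefore span the $5$-dimensional solution space of (\ref{pi-fu}). I expect no serious obstacle here: the only step that needs attention is the bookkeeping of the index shift and the pairing of the five linear factors on the right-hand side of (\ref{pi-fu}) with those in the recurrence, and commutativity is automatic since every matrix in sight is a polynomial in the single nilpotent $X$.
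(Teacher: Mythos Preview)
Your proposal is correct and follows exactly the approach the paper intends: its proof of Proposition~\ref{propo2} is the single line ``Completely analogous to the above proof of Proposition~\ref{propo1},'' and you have faithfully carried out that analogy. You even supply the verification of linear independence (the ``fundamental'' part), which the paper leaves implicit in both propositions.
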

\begin{proof}
Completely analogous to the above proof of  Proposition \ref{propo1}.
\end{proof}

\subsection{The components of the matrices $A$, $B$ and $C$}
There are many relations among the components of the matrices $A$, $B$ and $C$.
\begin{lemma}
The following identities hold:
\begin{equation}\label{compo-AB}
b_0=za_0', \quad b_1=a_0+z a_1', \quad b_2=a_1+za_2', \quad b_3=a_2+za_3', \quad b_4=a_3+za_4',
\end{equation}
and
\begin{equation}\label{compo-BC}
c_0=zb_0', \quad c_1=b_0+z b_1', \quad c_2=b_1+zb_2', \quad c_3=b_2+zb_3', \quad c_4=b_3+zb_4'.
\end{equation}
\end{lemma}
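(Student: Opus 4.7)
The proof is a direct extension of the Lemma in Section 2.3 that expressed $B$ in the $_3F_2$ setting as $zA' + XA$. The plan is to establish the two operator identities
\[
B = zA' + XA, \qquad C = zB' + XB,
\]
and then read off the coefficients of $I, X, X^2, X^3, X^4$ on both sides.

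For the first identity I would argue termwise. Inside each summand of $A$, the factor $P_n(s,t,X)$ is a polynomial in the nilpotent matrix $X$, and scalars commute with $X$, so $z\frac{d}{dz}(z^n P_n) = n z^n P_n$. Hence
\[
zA' + XA = \sum_{n=0}^{\infty} z^n P_n(s,t,X)\,(nI+X) = B.
\]
For the second identity the same calculation applies with the summand $P_n(s,t,X)(nI+X)$ in place of $P_n(s,t,X)$: since $X$ commutes with everything in sight, $zB' + XB$ hits the summand with the operator $(nI+X)$ a second time, giving exactly $C$.

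Next, since $X^5 = 0$, write $A = a_0 I + a_1 X + a_2 X^2 + a_3 X^3 + a_4 X^4$. Then
\[
zA' = za_0' I + za_1' X + za_2' X^2 + za_3' X^3 + za_4' X^4,
\]
\[
XA = a_0 X + a_1 X^2 + a_2 X^3 + a_3 X^4,
\]
(the $X^5$ term vanishes). Adding and comparing coefficients with $B = b_0 I + b_1 X + b_2 X^2 + b_3 X^3 + b_4 X^4$ yields the identities in \eqref{compo-AB}. Applying the same computation to $B = zA' + XA$ in place of $A$, that is, using $C = zB' + XB$ and expanding in powers of $X$, gives \eqref{compo-BC}.

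There is no real obstacle; the only point to be careful about is that $X$ and all the matrix factors in $P_n(s,t,X)(nI+X)^j$ are simultaneously polynomials in the single nilpotent matrix $X$, so they commute and the differentiation $z\frac{d}{dz}$ interacts cleanly with the matrix part. The nilpotency $X^5 = 0$ is what truncates the $XA$ and $XB$ series after four terms, producing exactly five scalar identities in each block.
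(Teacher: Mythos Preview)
Your proposal is correct and follows exactly the paper's own argument: the paper states the operator identities $B=zA'+XA$ and $C=zB'+XB$ and remarks that their components yield \eqref{compo-AB} and \eqref{compo-BC}. You have simply spelled out the termwise justification and the coefficient extraction in more detail than the paper does.
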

\begin{proof}
The matrices $B$ and $C$ satisfy the relations
\[ B=zA'+XA, \qquad C=zB'+XB, \]
whose components imply (\ref{compo-AB}) and (\ref{compo-BC}), respectively.
\end{proof}
\begin{lemma}
The following relations hold:
\begin{equation}\label{paren-1}
2a_0a_4-2a_1a_3+a_2^2=0,
\end{equation}
\begin{equation}\label{paren-2}
2b_0b_4-2b_1b_3+b_2^2=0,
\end{equation}
\begin{equation}\label{paren-3}
2c_0c_4-2c_1c_3+c_2^2=\frac{1}{1-z}.
\end{equation}
\end{lemma}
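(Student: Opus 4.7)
My plan is to adapt the proof of relations (\ref{3F2relacion1})--(\ref{3F2relacion3}) from the $_3F_2$ case, replacing the role that Clausen's identity played there by the Calabi-Yau self-duality of the order-$5$ Picard-Fuchs operator in (\ref{pi-fu}), which arises because the parameter set $\{\tfrac12,s,t,1-t,1-s\}$ is invariant under $\alpha\mapsto 1-\alpha$.

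For (\ref{paren-1}), my first step is to lift the identity to the logarithmic solutions (\ref{ysub5F4}). A direct expansion in powers of $L:=\ln(uz)$ shows that the quadratic combination
\[
y_2^2-2y_1y_3+2y_0y_4
\]
has all its $L^k$-coefficients with $k\ge 1$ cancelling identically, so it equals $a_2^2-2a_1a_3+2a_0a_4$ as a function of $z$; hence (\ref{paren-1}) is equivalent to the functional identity $y_2^2-2y_1y_3+2y_0y_4\equiv 0$. Next I verify that this combination is invariant under the local monodromy at $z=0$ via the explicit transformation $y_i\mapsto\sum_{l\le i}(2\pi i)^l y_{i-l}/l!$, so it is a single-valued holomorphic function of $z$. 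Since $a_0(0)=1$ and $a_i(0)=0$ for $i\ge 1$, this function vanishes at $z=0$; the Calabi-Yau self-duality of (\ref{pi-fu}) then forces all higher Taylor coefficients at $z=0$ to vanish as well, as the quadratic syzygy supplied by the invariant symmetric bilinear form on the solution space, acting on the Frobenius basis (which forms a single Jordan block at $z=0$).

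For (\ref{paren-2}), I apply the same argument to $\theta Y=(uz)^X B$, whose components $b_i$ play the role of the $a_i$; the initial values $b_0(0)=0$, $b_1(0)=1$, $b_i(0)=0$ for $i\ge 2$, read from (\ref{compo-AB}) together with the initial values of the $a_i$, force the analogous monodromy-invariant quadratic combination to vanish, yielding (\ref{paren-2}).

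For (\ref{paren-3}), I mirror the proof of (\ref{3F2relacion3}): the quantity $2c_0c_4-2c_1c_3+c_2^2$ is the square of a Wronskian-like determinant of appropriate fundamental solutions of (\ref{pi-fu}), and reading its logarithmic derivative off from (\ref{pi-fu}) gives a first-order ODE whose solution is $h/(1-z)$ for an integration constant $h$; the initial data $c_0(0)=c_1(0)=0$, $c_2(0)=1$, $c_3(0)=c_4(0)=0$ fix $h=1$. The main obstacle is making the Calabi-Yau isotropy step in (\ref{paren-1}) completely explicit; as a fallback, each identity can be checked coefficient-by-coefficient in $z$ using the recurrence for $P_n(s,t,X)$, an inspection I have carried out at first order for (\ref{paren-1}) where the cancellation boils down to clean identities among the elementary symmetric functions of $\{\tfrac12,s,t,1-t,1-s\}$.
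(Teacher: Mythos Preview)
Your reduction of (\ref{paren-1}) to the functional identity $y_2^2-2y_1y_3+2y_0y_4\equiv 0$ is correct and is exactly what the paper does. The gap is in the step where you invoke ``Calabi--Yau self-duality'' to conclude. Self-adjointness of the order-$5$ operator furnishes an invariant symmetric bilinear form $B$ on the solution space, and in the Frobenius basis this indeed forces the pattern $B(y_0,y_4)=-B(y_1,y_3)=B(y_2,y_2)$; but these are \emph{constants}, not functions of $z$. An invariant form gives relations among numbers $B(y_i,y_j)\in\mathbb{C}$, not a polynomial relation among the functions $y_i(z)$ themselves. Monodromy-invariance at $z=0$ only tells you the combination is holomorphic there (which you already knew from the $a_i$), and vanishing at one point does not propagate. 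So the ``quadratic syzygy'' you want is a genuinely stronger structural fact than self-duality.

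What the paper uses is precisely that stronger fact: by \cite[Proposition~4]{almkvist} each of the fourteen operators (\ref{pi-fu}) is the antisymmetric square of an order-$4$ operator (Yang's pullback), so that the $y_i$ are explicit $2\times 2$ Wronskians (\ref{calabi-yg}) of solutions $g_0,\dots,g_3$ of that order-$4$ equation, subject to (\ref{y2-twoways}). The relation $y_2^2-2y_1y_3+2y_0y_4=0$ is then the Pl\"ucker identity for these minors, and (\ref{paren-2}) follows by one differentiation. The same order-$4$ equation is indispensable for (\ref{paren-3}): the paper's ``Wronskian-like determinant'' $\Phi$ in (\ref{fun-Phi}) is built from the $g_i$, and the first-order ODE $2\Phi'=\frac{7z-6}{z(1-z)}\Phi$ is read off from the coefficient of $g'''$ in (\ref{diff-eq-calabi}), not from (\ref{pi-fu}) itself. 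Your sketch for (\ref{paren-3}) cannot be completed without introducing this pullback, and the coefficient-by-coefficient fallback only checks finitely many terms. In short, the missing ingredient throughout is the order-$4$ Yang pullback, which supplies both the Pl\"ucker relation for (\ref{paren-1})--(\ref{paren-2}) and the explicit $\Phi$ with its ODE for (\ref{paren-3}).
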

\begin{proof}
It is known \cite[Proposition 4]{almkvist} that we can write the functions in (\ref{ysub5F4}) in the form
\begin{align} \label{calabi-yg}
y_0=z & \left|
  \begin{array}{cc}
    g_{0}  & g_{1}  \\
    g'_{0} & g'_{1} \\
  \end{array}
\right|, \quad
y_1= z \left|
  \begin{array}{cc}
    g_{0}  & g_{2}  \\
    g'_{0} & g'_{2} \\
  \end{array}
\right|, \quad
y_2=z \left|
  \begin{array}{cc}
    g_{0}  & g_{3}  \\
    g'_{0} & g'_{3} \\
  \end{array}
\right|, \nonumber \\
& y_3= \frac{z}{2} \left|
  \begin{array}{cc}
    g_{1}  & g_{3}  \\
    g'_{1} & g'_{3} \\
  \end{array}
\right|, \qquad
y_4=\frac{z}{2} \left|
  \begin{array}{cc}
    g_{2}  & g_{3}  \\
    g'_{2} & g'_{3} \\
  \end{array}
\right|,
\end{align}
with
\begin{equation}\label{y2-twoways}
\left|
  \begin{array}{cc}
    g_{0}  & g_{3}  \\
    g'_{0} & g'_{3} \\
  \end{array}
\right|=
\left|
  \begin{array}{cc}
    g_{1}  & g_{2}  \\
    g'_{1} & g'_{2} \\
  \end{array}
\right|,
\end{equation}
where $g_0$, $g_1$, $g_2$, $g_3$ are solutions of a certain fourth order linear differential equation. Differentiating twice (\ref{y2-twoways}), we obtain the relations
\begin{equation}\label{dy2-twoways}
\left|
  \begin{array}{cc}
    g_{0}   & g_{3}   \\
    g''_{0} & g''_{3} \\
  \end{array}
\right|=\left|
  \begin{array}{cc}
    g_{1}   & g_{2}   \\
    g''_{1} & g''_{2} \\
  \end{array}
\right|,
\end{equation}
\begin{equation}\label{ddy2-twoways}
\left|
  \begin{array}{cc}
    g'_{0}   & g'_{3}   \\
    g''_{0} & g''_{3} \\
  \end{array}
\right|+\left|
  \begin{array}{cc}
    g_{0}   & g_{3}   \\
    g'''_{0} & g'''_{3} \\
  \end{array}
\right|=\left|
  \begin{array}{cc}
    g'_{1}   & g'_{2}   \\
    g''_{1} & g''_{2} \\
  \end{array}
\right|+\left|
  \begin{array}{cc}
    g_{1}   & g_{2}   \\
    g'''_{1} & g'''_{2} \\
  \end{array}
\right|.
\end{equation}
We will need (\ref{dy2-twoways}) in the proof of (\ref{paren-2}), and both identities in the proof of (\ref{paren-3}).
Using (\ref{ysub5F4}), we see that the identities (\ref{paren-1}) and (\ref{paren-2}) are an immediate consequence of the relations, proved in the corollary of \cite[Proposition 4]{almkvist}:
\begin{equation}\label{yy-diffydiffy}
2y_0y_4-2y_1y_3+y_2^2=0, \qquad 2y'_0y'_4-2y'_1y'_3+{y'_2}^2=0.
\end{equation}
We recall those proofs because we will need them to derive (\ref{paren-3}). Writing
\[
 y_2^2= z \left|
  \begin{array}{cc}
    g_{0}  & g_{3}  \\
    g'_{0} & g'_{3} \\
  \end{array}
\right| \cdot
z \left|
  \begin{array}{cc}
    g_{1}  & g_{2}  \\
    g'_{1} & g'_{2} \\
  \end{array}
\right|,
\]
we see that the first identity in (\ref{yy-diffydiffy}) is trivial by expanding. In the same vein, differentiating (\ref{calabi-yg}), we deduce that
\[ 2 \left(y'_0-\frac{y_0}{z}\right) \left(y'_4-\frac{y_4}{z}\right)-
2\left(y'_1-\frac{y_1}{z}\right) \left(y'_3-\frac{y_3}{z}\right)+\left(y'_2-\frac{y_2}{z}\right)^2=0,  \]
which, together with the first identity in (\ref{yy-diffydiffy}), implies the second identity in (\ref{yy-diffydiffy}).
To prove (\ref{paren-3}) we differentiate (\ref{calabi-yg}) twice and use the relations (\ref{y2-twoways}), (\ref{dy2-twoways}), (\ref{ddy2-twoways}), to obtain
\begin{align}\label{ddy2-Phi}
\left(y''_2-\frac{2y'_2}{z}+\frac{2y_2}{z^2} \right)^2- & 2\left (y''_1-\frac{2y'_1}{z}+\frac{2y_1}{z^2} \right) \left(y''_3-\frac{2y'_3}{z}+\frac{2y_3}{z^2} \right) \nonumber \\ + & 2\left(y''_0-\frac{2y'_0}{z}+\frac{2y_0}{z^2} \right)\left(y''_4-\frac{2y'_4}{z}+\frac{2y_4}{z^2} \right)=z^2 \Phi^2,
\end{align}
where
\begin{equation}\label{fun-Phi}
\Phi=
\left|
  \begin{array}{cc}
    g_{0}   &  g_{3}    \\
    g'''_{0} & g'''_{3} \\
  \end{array}
\right| -
\left|
  \begin{array}{cc}
    g_{1}    & g_{2}    \\
    g'''_{1} & g'''_{2} \\
  \end{array}
\right|=
\left|
  \begin{array}{cc}
    g'_{1}  & g'_{2}  \\
    g''_{1} & g''_{2} \\
  \end{array}
\right|-
\left|
  \begin{array}{cc}
    g'_{0}  & g'_{3}  \\
    g''_{0} & g''_{3} \\
  \end{array}
\right|.
\end{equation}
Using (\ref{yy-diffydiffy}) we can simplify the left hand-side of (\ref{ddy2-Phi}), and we obtain
\begin{equation}\label{ddy-Phi2}
{y''_2}^2-2y''_1y''_3+2y''_0y''_4=z^2\Phi^2.
\end{equation}
Summing the two identities for $\Phi$ in (\ref{fun-Phi}) and differentiating, we have
\begin{equation}\label{sum-fun-Phi}
2 \Phi'=
\left|
  \begin{array}{cc}
    g_{0}     & g_{3}     \\
    g''''_{0} & g''''_{3} \\
  \end{array}
\right| -
\left|
  \begin{array}{cc}
    g_{1}     & g_{2}     \\
    g''''_{1} & g''''_{2} \\
  \end{array}
\right|.
\end{equation}
But for all of the $14$ hypergeometric equations (\ref{pi-fu}) we have from \cite[Subsect. 2.1]{almkvist0} that the fourth order linear differential equation pullback is of the form
\begin{equation}\label{diff-eq-calabi}
g''''+\frac{6-7z}{z(1-z)} g'''+\rho_2(z) g''+\rho_1(z) g'+\rho_0(z) g=0.
\end{equation}
Therefore, from (\ref{y2-twoways}), (\ref{dy2-twoways}), (\ref{ddy2-twoways}), (\ref{fun-Phi}), (\ref{sum-fun-Phi}) and (\ref{diff-eq-calabi}), we deduce that
\[ 2\Phi'(z)=\frac{7z-6}{z(1-z)} \Phi(z). \]
Solving this equation we obtain
\[ \Phi(z)=h \cdot \frac{1}{z^3 \sqrt{1-z}}, \]
where $h$ is a constant. Thus, from (\ref{ddy-Phi2}), we get
\[ {y''_2}^2-2y''_1y''_3+2y''_0y''_4=\frac{h^2}{z^4(1-z)}, \]
which implies that
\[ c_2^2-2c_1c_3+2c_0c_4=\frac{h^2}{1-z}. \]
Substituting $z=0$, we finally determine that $h^2=1$.
\end{proof}

\begin{lemma}
The following relations hold:
\begin{equation}\label{paren-4}
a_0b_4+a_4b_0-a_1b_3-a_3b_1+a_2b_2=0,
\end{equation}
\begin{equation}\label{paren-5}
a_0c_4+a_4c_0-a_1c_3-a_3c_1+a_2c_2=0,
\end{equation}
\begin{equation}\label{paren-6}
b_0c_4+b_4c_0-b_1c_3-b_3c_1+b_2c_2=0.
\end{equation}
\end{lemma}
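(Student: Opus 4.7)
My strategy is to derive all three identities from the already-proved relations (\ref{paren-1}), (\ref{paren-2}) together with the first-order recurrences $B=zA'+XA$ and $C=zB'+XB$ of the previous lemma. To avoid repetition I introduce the abbreviation
\[
\langle P,Q\rangle := p_0q_4+p_4q_0-p_1q_3-p_3q_1+p_2q_2
\]
for any two order-$5$ matrices $P,Q$. This pairing is symmetric and bilinear, it obeys the Leibniz rule $\langle P,Q\rangle' = \langle P',Q\rangle+\langle P,Q'\rangle$, and because the signs $(+,-,+,-,+)$ are palindromic one also has $\langle P,P'\rangle = \tfrac{1}{2}\langle P,P\rangle'$ in the diagonal case. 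In this notation (\ref{paren-1}) and (\ref{paren-2}) read $\langle A,A\rangle=0$ and $\langle B,B\rangle=0$, while the three identities to prove become $\langle A,B\rangle=\langle A,C\rangle=\langle B,C\rangle=0$.

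For (\ref{paren-4}) I would substitute $B=zA'+XA$ to split
\[
\langle A,B\rangle = z\langle A,A'\rangle+\langle A,XA\rangle.
\]
Since $(XA)_0=0$ and $(XA)_i=a_{i-1}$ for $i\geq 1$, a direct expansion gives $\langle A,XA\rangle = a_0a_3-a_1a_2-a_3a_0+a_2a_1 = 0$, while the remaining summand equals $\tfrac{z}{2}\langle A,A\rangle'$, which vanishes by (\ref{paren-1}). Identity (\ref{paren-6}) follows from the word-for-word identical computation with $(A,B)$ replaced by $(B,C)$, using $C=zB'+XB$ and (\ref{paren-2}).

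The mixed identity (\ref{paren-5}) is the only delicate step. Writing $\langle A,C\rangle = z\langle A,B'\rangle+\langle A,XB\rangle$, I would first expand the boundary piece $\langle A,XB\rangle = a_0b_3-a_1b_2-a_3b_0+a_2b_1$. For the derivative piece, differentiating the already-proved identity (\ref{paren-4}) gives $\langle A,B'\rangle = -\langle A',B\rangle$; substituting $A'=(B-XA)/z$ then yields
\[
z\langle A,B'\rangle = -\langle B,B\rangle+\langle XA,B\rangle.
\]
The first term vanishes by (\ref{paren-2}), and a direct expansion shows $\langle XA,B\rangle = -\langle A,XB\rangle$, so the two boundary pieces cancel and $\langle A,C\rangle=0$.

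The main obstacle is purely combinatorial: keeping track of signs when $X$ shifts indices and when the Leibniz rule moves derivatives between factors. No new input beyond (\ref{paren-1}), (\ref{paren-2}), the two recurrences, and the palindromic symmetry of $\langle\cdot,\cdot\rangle$ is needed.
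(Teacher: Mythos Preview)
Your proof is correct and follows essentially the same route as the paper: differentiate the known identities (\ref{paren-1}), (\ref{paren-2}) and use the recurrences $B=zA'+XA$, $C=zB'+XB$ to transmute $A$-terms into $B$-terms and $B$-terms into $C$-terms, invoking (\ref{paren-2}) for the mixed case (\ref{paren-5}). The only difference is cosmetic: you package the computation via the symmetric bilinear form $\langle P,Q\rangle$ and the identities $\langle A,XA\rangle=0$, $\langle XA,B\rangle=-\langle A,XB\rangle$, whereas the paper writes out the substitutions $za_i'=b_i-a_{i-1}$ componentwise; the underlying cancellations are identical.
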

\begin{proof}
To prove (\ref{paren-4}), we apply the operator $z\frac{d}{dz}$ to (\ref{paren-1}), and use (\ref{compo-AB}) to substitute $za'_0=b_0$, $za'_1=b_1-a_0$, $za'_2=b_2-a_1$, $za'_3=b_3-a_2$, $za'_4=b_4-a_3$. To prove (\ref{paren-6}) we apply  the operator to (\ref{paren-2}) and use (\ref{compo-BC}). Finally, to prove (\ref{paren-5}), we apply the operator to (\ref{paren-4}) and use (\ref{compo-AB}), (\ref{compo-BC}) and (\ref{paren-2}).
\end{proof}

\begin{lemma}
Let $u_j$, $v_j$ and $w_j$ be the determinants
\begin{equation}\label{def-ujvjwj}
u_j=\left|
  \begin{array}{cc}
    a_{0} & b_{0} \\
    a_{j} & b_{j} \\
  \end{array}
\right|, \qquad
v_j= \left|
  \begin{array}{cc}
    a_{0} & c_{0} \\
    a_{j} & c_{j} \\
  \end{array}
\right|, \qquad
w_j=\left|
  \begin{array}{cc}
    b_{0} & c_{0} \\
    b_{j} & c_{j} \\
  \end{array}
\right|.
\end{equation}
Then, the following identities hold:
\begin{equation}\label{ecu-uj}
u_2^2-2u_1u_3=0,
\end{equation}
\begin{equation}\label{1ecu1-z}
v_2^2-2v_1v_3=\frac{a_0^2}{1-z},
\end{equation}
\begin{equation}\label{2ecu1-z}
w_2^2-2w_1w_3=\frac{b_0^2}{1-z},
\end{equation}
\begin{equation}\label{ecu-uj-vj}
u_2v_2-u_1v_3-v_1u_3=0,
\end{equation}
\begin{equation}\label{ecu-uj-wj}
u_2w_2-u_1w_3-w_1u_3=0,
\end{equation}
\begin{equation}\label{ecu-vj-wj}
v_2w_2-v_1w_3-w_1v_3=\frac{a_0b_0}{1-z}.
\end{equation}
\end{lemma}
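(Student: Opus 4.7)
My plan is to interpret the six required identities as values of a single symmetric bilinear form on $\mathbb{R}^5$ evaluated on pairs drawn from a triple of vectors, the form being chosen to encode exactly the known quadratic relations (\ref{paren-1})--(\ref{paren-6}). Concretely, I would define
\[ \langle \mathbf{x},\mathbf{y}\rangle = x_0 y_4 + x_4 y_0 - x_1 y_3 - x_3 y_1 + x_2 y_2 \]
for $\mathbf{x},\mathbf{y}\in\mathbb{R}^5$. Writing $\mathbf{a}=(a_0,\dots,a_4)$ and similarly for $\mathbf{b},\mathbf{c}$, the previously proved identities translate into
\[ \langle \mathbf{a},\mathbf{a}\rangle=\langle \mathbf{b},\mathbf{b}\rangle=0, \quad \langle \mathbf{c},\mathbf{c}\rangle=\frac{1}{1-z}, \quad \langle \mathbf{a},\mathbf{b}\rangle=\langle \mathbf{a},\mathbf{c}\rangle=\langle \mathbf{b},\mathbf{c}\rangle=0. \]

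The second step is to observe that the determinants in (\ref{def-ujvjwj}) exhibit $u_j,v_j,w_j$ as the $j$-th components of the vectors
\[ \mathbf{u}=a_0\mathbf{b}-b_0\mathbf{a}, \qquad \mathbf{v}=a_0\mathbf{c}-c_0\mathbf{a}, \qquad \mathbf{w}=b_0\mathbf{c}-c_0\mathbf{b}, \]
whose $0$-th components all vanish: $u_0=v_0=w_0=0$. This vanishing makes the pairing simplify: for any two vectors $\mathbf{p},\mathbf{q}\in\{\mathbf{u},\mathbf{v},\mathbf{w}\}$ one has $\langle \mathbf{p},\mathbf{q}\rangle = p_2 q_2 - p_1 q_3 - p_3 q_1$, which is precisely the left-hand side of the corresponding identity among (\ref{ecu-uj})--(\ref{ecu-vj-wj}).

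The six identities then follow by one-line bilinearity computations. For instance,
\[ \langle \mathbf{u},\mathbf{u}\rangle = a_0^2\langle \mathbf{b},\mathbf{b}\rangle - 2a_0b_0\langle \mathbf{a},\mathbf{b}\rangle + b_0^2\langle \mathbf{a},\mathbf{a}\rangle = 0, \]
\[ \langle \mathbf{v},\mathbf{v}\rangle = a_0^2\langle \mathbf{c},\mathbf{c}\rangle - 2a_0c_0\langle \mathbf{a},\mathbf{c}\rangle + c_0^2\langle \mathbf{a},\mathbf{a}\rangle = \frac{a_0^2}{1-z}, \]
and
\[ \langle \mathbf{u},\mathbf{v}\rangle = a_0^2\langle \mathbf{b},\mathbf{c}\rangle - a_0 c_0\langle \mathbf{a},\mathbf{b}\rangle - a_0 b_0\langle \mathbf{a},\mathbf{c}\rangle + b_0 c_0\langle \mathbf{a},\mathbf{a}\rangle = 0, \]
with the analogous expansions delivering (\ref{2ecu1-z}), (\ref{ecu-uj-wj}) and (\ref{ecu-vj-wj}).

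The genuine content of this lemma is thus the observation that the relations (\ref{paren-1})--(\ref{paren-6}) are simultaneously captured as the Gram matrix of a single symmetric pairing under which $\mathbf{a},\mathbf{b},\mathbf{c}$ form a nearly isotropic triple; once that is recognised, no further appeal to the Picard--Fuchs structure is needed. The only real obstacle is the sign-by-sign verification that the form above correctly reproduces the six earlier identities, which is immediate upon expansion.
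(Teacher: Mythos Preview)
Your argument is correct. The bilinear form
\[
\langle \mathbf{x},\mathbf{y}\rangle = x_0y_4+x_4y_0-x_1y_3-x_3y_1+x_2y_2
\]
does encode (\ref{paren-1})--(\ref{paren-6}) exactly as you claim, the vectors $\mathbf{u},\mathbf{v},\mathbf{w}$ have vanishing zeroth component so that $\langle\mathbf{p},\mathbf{q}\rangle$ reduces to the left-hand sides of (\ref{ecu-uj})--(\ref{ecu-vj-wj}), and bilinearity gives the right-hand sides immediately. No step is missing.

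Your route is genuinely different from the paper's. The paper proceeds by elimination: for each identity it uses two of the relations (\ref{paren-1})--(\ref{paren-6}) to solve for a pair of the index-$4$ variables $a_4,b_4,c_4$, substitutes those expressions into a third relation, and simplifies until the desired $2\times2$-determinant combination appears. This is straightforward but treats the six conclusions as separate computations and obscures why they have a common shape. Your approach packages (\ref{paren-1})--(\ref{paren-6}) as the Gram matrix of $(\mathbf{a},\mathbf{b},\mathbf{c})$ under a single symmetric form, and then (\ref{ecu-uj})--(\ref{ecu-vj-wj}) are simply the Gram matrix of the derived triple $(\mathbf{u},\mathbf{v},\mathbf{w})$. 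This is both shorter and more explanatory: it makes transparent that all six identities are instances of one phenomenon, and it would extend mechanically to any further linear combination of $\mathbf{a},\mathbf{b},\mathbf{c}$ with vanishing zeroth component. The paper's method, by contrast, requires no structural insight and would be easier to discover by symbolic experimentation, which is how the author reports having found these relations.
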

\begin{proof}
To prove (\ref{ecu-uj}), we use  (\ref{paren-1}) and (\ref{paren-4}) to write $a_4$, $b_4$ as functions of $a_0$, $b_0$, $a_1$, $b_1$, $a_2$, $b_2$, $a_3$, $b_3$; then, we substitute $b_4$ in (\ref{paren-2}) and simplify. To prove (\ref{1ecu1-z}), we use (\ref{paren-1}) and (\ref{paren-5}) to write $a_4$, $c_4$ as functions of $a_0$, $c_0$, $a_1$, $c_1$, $a_2$, $c_2$, $a_3$, $c_3$; then, we substitute $c_4$ in (\ref{paren-3}) and simplify. In the same way, to derive (\ref{2ecu1-z}), we use (\ref{paren-2}) and (\ref{paren-6}); then, we substitute $c_4$ in (\ref{paren-3}) and simplify. Finally, to derive (\ref{ecu-uj-vj}), (\ref{ecu-uj-wj}) and (\ref{ecu-vj-wj}), we use (\ref{paren-4}), (\ref{paren-5}), (\ref{paren-6}) to write $a_4$, $b_4$, $c_4$ as functions of $a_0$, $b_0$, $c_0$, $a_1$, $b_1$, $c_1$, $a_2$, $b_2$, $c_2$, $a_3$, $b_3$, $c_3$; then, we substitute in (\ref{paren-1}), (\ref{paren-2}) and (\ref{paren-3}), respectively.
\end{proof}

\begin{lemma}
Let $M_0$, $M_1$, $M_2$ and $M_3$ be the determinants
\begin{equation}\label{deter-M0-M1}
M_0=\left|
  \begin{array}{ccc}
    a_{1} & b_{1} & c_{1} \\
    a_{2} & b_{2} & c_{2} \\
    a_{3} & b_{3} & c_{3} \\
  \end{array}
\right|, \qquad
M_1=\left|
  \begin{array}{ccc}
    a_{0} & b_{0} & c_{0} \\
    a_{2} & b_{2} & c_{2} \\
    a_{3} & b_{3} & c_{3} \\
  \end{array}
\right|,
\end{equation}
\begin{equation}\label{deter-M2-M3}
M_2=\left|
  \begin{array}{ccc}
    a_{0} & b_{0} & c_{0} \\
    a_{1} & b_{1} & c_{1} \\
    a_{3} & b_{3} & c_{3} \\
  \end{array}
\right|, \qquad
M_3=\left|
  \begin{array}{ccc}
    a_{0} & b_{0} & c_{0} \\
    a_{1} & b_{1} & c_{1} \\
    a_{2} & b_{2} & c_{2} \\
  \end{array}
\right|.
\end{equation}
Then, we have
\begin{equation}\label{M1M2M3}
M_3=\frac{u_1}{\sqrt{1-z}}, \quad M_2=\frac{u_2}{\sqrt{1-z}}, \quad
M_1=\frac{u_3}{\sqrt{1-z}}, \quad M_0=\frac{u_4}{\sqrt{1-z}}.
\end{equation}
\end{lemma}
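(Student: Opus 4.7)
My plan is to mimic the strategy of the $_3F_2$ proof and of the preceding lemma: for $M_3, M_2, M_1$ I would square the determinant and collapse it with the already-established quadratic relations among the $u_j$'s and $v_j$'s, while $M_0$ would be handled separately via a linear dependence among the four row vectors. The first move is to shrink each of $M_3$, $M_2$, $M_1$ from a $3\times 3$ determinant to a $2\times 2$ minor by applying, to its two non-top rows, the row operation $R_i \leftarrow a_0 R_i - a_i R_0$. This operation multiplies the determinant by $a_0$ and kills the first column in the two lower rows, which become $(0, u_i, v_i)$; expanding along the first column and canceling one factor of $a_0$ gives
\[
a_0 M_3 = u_1 v_2 - u_2 v_1, \qquad a_0 M_2 = u_1 v_3 - u_3 v_1, \qquad a_0 M_1 = u_2 v_3 - u_3 v_2.
\]

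Squaring each of these $2\times 2$ minors and substituting the three key identities (\ref{ecu-uj}), (\ref{ecu-uj-vj}), (\ref{1ecu1-z}) in the forms $u_2^2 = 2 u_1 u_3$, $u_2 v_2 = u_1 v_3 + v_1 u_3$, $v_2^2 = 2 v_1 v_3 + a_0^2/(1-z)$, the cross-terms telescope and everything collapses to
\[
(u_1 v_2 - u_2 v_1)^2 = \frac{u_1^2 a_0^2}{1-z}, \quad (u_1 v_3 - u_3 v_1)^2 = \frac{u_2^2 a_0^2}{1-z}, \quad (u_2 v_3 - u_3 v_2)^2 = \frac{u_3^2 a_0^2}{1-z}.
\]
Hence $M_k = \pm u_{4-k}/\sqrt{1-z}$ for $k = 1, 2, 3$. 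The positive sign is fixed by comparing leading behavior of the analytic Taylor series at $z = 0$: the $+$ sign for $M_3$ is immediate from $a_0 M_3(0) = u_1(0) = 1$, and the signs for $M_2, M_1$ follow because the ratio $M_k \sqrt{1-z}/u_{4-k}$ is an analytic function whose square equals $1$, hence a constant $\pm 1$, determined by any single nonvanishing coefficient of its Taylor series.

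For $M_0$, whose defining minor does not contain the top row, I would use instead that the four vectors $r_i = (a_i, b_i, c_i) \in \mathbb{R}^3$ for $i = 0, 1, 2, 3$ are linearly dependent. Appending any column of the $4 \times 3$ matrix with rows $r_0, \ldots, r_3$ as a fourth column and Laplace-expanding the resulting vanishing $4\times 4$ determinant along it yields the uniform relation
\[
a_0 M_0 - a_1 M_1 + a_2 M_2 - a_3 M_3 = 0.
\]
Substituting the formulas just established reduces the target identity $M_0 = u_4/\sqrt{1-z}$ to the purely algebraic claim $a_0 u_4 = a_1 u_3 - a_2 u_2 + a_3 u_1$. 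Expanding with $u_j = a_0 b_j - a_j b_0$, the right-hand side regroups as $a_0(a_1 b_3 - a_2 b_2 + a_3 b_1) + b_0(a_2^2 - 2 a_1 a_3)$, and relations (\ref{paren-1}) and (\ref{paren-4}) convert the two brackets into $a_0 b_4 + a_4 b_0$ and $-2 a_0 a_4$ respectively, yielding $a_0^2 b_4 - a_0 a_4 b_0 = a_0 u_4$. The main obstacle will be keeping the algebra clean during the telescoping step---in particular, tracking which of the three quadratic relations is applied to eliminate which monomial---together with the square-root sign argument, which is conceptually routine but requires a small Taylor-coefficient check.
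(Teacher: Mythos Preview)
Your argument is correct. For $M_3$ it essentially coincides with the paper's: the paper also uses (\ref{ecu-uj}) and (\ref{ecu-uj-vj}) to eliminate $u_3$ and $v_3$ and substitutes into (\ref{1ecu1-z}), which amounts exactly to your identity $(u_1v_2-u_2v_1)^2=u_1^2a_0^2/(1-z)$, and neither proof spells out the square-root sign check more than you do. The route diverges for the remaining three identities. The paper establishes the ratios $M_j/M_3=u_{4-j}/u_1$ for $j=0,1,2$ in one stroke, by using (\ref{ecu-uj}), (\ref{ecu-uj-vj}) together with (\ref{paren-1}), (\ref{paren-2}), (\ref{paren-5}) to eliminate $a_4,b_4,c_4,a_3,b_3$ and then simplifying---a substitute-and-simplify argument well suited to a computer algebra system. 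You instead repeat the squaring trick for $M_2$ and $M_1$ and treat $M_0$ separately via the cofactor relation $a_0M_0-a_1M_1+a_2M_2-a_3M_3=0$ combined with (\ref{paren-1}) and (\ref{paren-4}). Your version is more explicit and hand-checkable, and your $M_0$ step neatly avoids ever touching the index-$4$ quantities $a_4,b_4,c_4$; the paper's version is more uniform across all four cases and sidesteps the repeated sign determinations. A small streamlining of your approach: once $M_3=u_1/\sqrt{1-z}$ is known, multiplying $a_0M_2=u_1v_3-u_3v_1$ by $u_1$ and applying (\ref{ecu-uj}) and (\ref{ecu-uj-vj}) gives the linear identity $u_1\,(a_0M_2)=u_2\,(a_0M_3)$, hence $M_2=u_2/\sqrt{1-z}$ directly, with no squaring and no sign check; $M_1$ follows the same way.
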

\begin{proof}
To prove the first identity, we use (\ref{ecu-uj}), (\ref{ecu-uj-vj}) and (\ref{ecu-uj-wj})  to write $u_3$, $v_3$, $w_3$ as functions of $u_0$, $v_0$, $w_0$, $u_1$, $v_1$, $w_1$, $u_2$, $v_2$, $w_2$. Then, we substitute $v_3$ in (\ref{1ecu1-z}) and simplify. To derive the other identities, we prove that
\begin{equation}\label{MoverM}
\frac{M_0}{M_3}=\frac{u_4}{u_1}, \qquad \frac{M_1}{M_3}=\frac{u_3}{u_1}, \qquad \frac{M_2}{M_3}=\frac{u_2}{u_1}.
\end{equation}
For it we use  (\ref{ecu-uj}), (\ref{ecu-uj-vj}) and (\ref{paren-1}), (\ref{paren-2}), (\ref{paren-5}) to write $a_4$, $b_4$, $c_4$, $a_3$, $b_3$ as functions of $a_0$, $b_0$, $c_0$, $a_1$, $b_1$, $c_1$, $a_2$, $b_2$, $c_2$ and $c_3$.  Then, we substitute these values in the identity we want to prove and simplify.
\end{proof}

\begin{lemma}
The following identity holds:
\begin{equation}\label{paraJ}
\frac{u_1v_4-u_4v_1}{u_1v_2-u_2v_1}=\frac{a_1b_2-a_2b_1}{a_0b_1-a_1b_0}.
\end{equation}
\end{lemma}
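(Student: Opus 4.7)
The plan is to rewrite (\ref{paraJ}) as a single determinantal identity, then reduce to a new assertion of the same flavor as the preceding lemma.

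For every index $j$, a direct expansion gives
\[
u_1 v_j - u_j v_1 \;=\; a_0\!\left|\begin{array}{ccc} a_0 & a_1 & a_j \\ b_0 & b_1 & b_j \\ c_0 & c_1 & c_j \end{array}\right|\;=\; a_0\, M_{01j},
\]
where $M_{01j}$ denotes the $3\times 3$ minor on columns $0,1,j$ of the matrix with rows $a$, $b$, $c$. One checks this identity by expanding $M_{01j}$ along its first row as $a_0 W_{1j}-a_1 w_j+a_j w_1$ (with $W_{1j}:=b_1c_j-b_jc_1$) and matching with the direct expansion of $u_1v_j-u_jv_1$. Taking $j=2$ and $j=4$, and recalling $M_{012}=M_3=u_1/\sqrt{1-z}$ from (\ref{M1M2M3}), the identity (\ref{paraJ}) is equivalent to
\[
M_{014} \;=\; \frac{U_{12}}{\sqrt{1-z}}, \qquad U_{12} := a_1 b_2 - a_2 b_1.
\]

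I would prove the squared form $(1-z)\,M_{014}^2 = U_{12}^2$ of this new identity in parallel with the proof of the preceding lemma. First, expand $M_{014}$ along its third column to obtain $M_{014}=a_4w_1-b_4v_1+c_4u_1$. Then use (\ref{paren-1}), (\ref{paren-4}), (\ref{paren-5}) to eliminate the index-$4$ entries $a_4, b_4, c_4$ in favor of the index-$\le 3$ entries, exploiting the small simplification $a_0 w_1 + b_0 v_1 - c_0 u_1 = 2 a_0 w_1$, which follows by inspection. After squaring and multiplying by $(1-z)$, the remaining relations (\ref{paren-2}), (\ref{paren-3}), (\ref{paren-6}) together with the derived identities (\ref{ecu-uj})--(\ref{ecu-vj-wj}) should collapse the result to $U_{12}^2$.

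The main obstacle is the bookkeeping: fifteen entries tied together by many quadratic constraints. A more structural route is to recognize (\ref{paren-1})--(\ref{paren-6}) as asserting, for the symmetric bilinear form $\langle x,y\rangle = x_0 y_4 - x_1 y_3 + x_2 y_2 - x_3 y_1 + x_4 y_0$ on $\mathbb{C}^5$, that $\{a,b\}$ spans a totally isotropic $2$-plane, that $c \in \{a,b\}^\perp$, and that $\langle c,c\rangle = 1/(1-z)$. Writing $R$ for the $3 \times 5$ matrix of rows $a,b,c$ and $J$ for the Gram matrix of the form, the product $R J R^{T}$ then has rank one, and Cauchy--Binet applied to its minors yields the required quadratic relation among the $3\times 3$ minors of $R$ in a conceptual way.
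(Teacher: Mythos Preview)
Your reduction via the Sylvester-type identity $u_1 v_j - u_j v_1 = a_0 M_{01j}$ is a genuinely different and more structural route than the paper's proof, which is simply: use (\ref{ecu-uj}), (\ref{ecu-uj-vj}) together with (\ref{paren-1}), (\ref{paren-2}), (\ref{paren-5}) to express $a_4,b_4,c_4,a_3,b_3$ in terms of the remaining lower-index entries (and $c_3$), substitute into both sides of (\ref{paraJ}), and simplify. Your reformulation $M_{014}\sqrt{1-z}=a_1b_2-a_2b_1$ puts the lemma in exact parallel with the identities (\ref{M1M2M3}) of the preceding lemma, and your reading of (\ref{paren-1})--(\ref{paren-6}) as the statement $RJR^{T}=\operatorname{diag}(0,0,(1-z)^{-1})$ for the antidiagonal form $J$ is the right conceptual picture behind all of these quadratic relations.

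Two caveats. First, neither of your two completions is actually carried out: the elimination route ends with ``should collapse,'' and for the Cauchy--Binet route you would still need to say precisely which minor identity of $RJR^{T}$ (e.g.\ the vanishing of a specific $2\times 2$ minor, expanded over the five columns) produces the desired quadratic relation among the $3\times 3$ minors of $R$. Since the paper's own proof is just as terse (``then simplify''), this is not fatal, but one route should be written out in full. Second, proving only the squared form $(1-z)M_{014}^{2}=U_{12}^{2}$ leaves a sign that does \emph{not} automatically cancel in the ratio, because the sign in $M_{012}\sqrt{1-z}=u_1$ is already fixed by (\ref{M1M2M3}); you should either resolve it by an analyticity/leading-term argument near $z=0$, or---cleaner---prove the unsquared identity $u_1(u_1v_4-u_4v_1)=U_{12}(u_1v_2-u_2v_1)$ directly by your elimination scheme, which avoids the issue and is in fact closer in spirit to what the paper does.
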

\begin{proof}
Use  (\ref{ecu-uj}), (\ref{ecu-uj-vj}) and (\ref{paren-1}), (\ref{paren-2}), (\ref{paren-5}) to write $a_4$, $b_4$, $c_4$, $a_3$, $b_3$ as functions of $a_0$, $b_0$, $c_0$, $a_1$, $b_1$, $c_1$, $a_2$, $b_2$, $c_2$ and $c_3$. Then simplify.
\end{proof}

\subsection{The equations for $z$, $a$, $b$ and $c$}
As we want the system (\ref{5ecuaciones}) to be compatible we first impose that
\begin{equation}\label{deter-eq0}
\left|
  \begin{array}{cccc}
    a_{0} & b_{0} & c_{0} & m_{0} \\
    a_{1} & b_{1} & c_{1} & m_{1} \\
    a_{2} & b_{2} & c_{2} & m_{2} \\
    a_{3} & b_{3} & c_{3} & m_{3} \\
  \end{array}
\right|=0.
\end{equation}
Expanding the determinant along the last column, we obtain
\begin{equation}\label{des-deter}
m_{0}M_{0}-m_{1}M_{1}+m_{2}M_{2}-m_{3}M_{3}=0.
\end{equation}
We now define the functions $H_0=M_{0}/M_{3}$ and $H_1=M_{1}/M_{3}$ and $H_2=M_{2}/M_{3}$. Relations (\ref{MoverM}), imply the identities
\begin{equation}\label{H2H1H0}
H_2=\frac{a_0b_2-a_2b_0}{a_0b_1-a_1b_0}, \qquad  H_1=\frac{a_0b_3-a_3b_0}{a_0b_1-a_1b_0}, \qquad H_0=\frac{a_0b_4-a_4b_0}{a_0b_1-a_1b_0},
\end{equation}
and (\ref{ecu-uj}) implies $2H_1=H_2^2$. This last relation allows us to simplify the equation (\ref{des-deter}), and we obtain
\begin{equation}\label{ecu5F4emes}
\frac{1}{6}\left( H_2+\ln(uz)-\nu_0 \right)^3-\nu_1\left( H_2+\ln(uz)-\nu_0 \right)-
\nu_2-\left( \frac{1}{6}H_2^3-H_0  \right)=0,
\end{equation}
where
\begin{equation}\label{nu0nu1nu2}
\nu_0=\frac{m_1}{m_0}+\ln(uz), \qquad
\nu_1=\frac{m_1^2}{2m_0^2}-\frac{m_2}{m_0}, \qquad \nu_2=\frac{m_1^3}{3m_0^3}-\frac{m_1m_2}{m_0^2}+\frac{m_3}{m_0}.
\end{equation}
From the definition of the matrix $M$ we derive
\begin{equation}\label{5F4-m0}
m_0=\frac{1}{\pi^2},
\end{equation}
and
\begin{equation}\label{nu0}
\nu_0=(\Psi(s)+\Psi(1-s)+2\gamma-\ln 2)+(\Psi(t)+\Psi(1-t)+2\gamma-\ln 2),
\end{equation}
\begin{equation}\label{nu1}
\nu_1=\frac{\pi^2}{2} \left( k+\frac{5}{3}+\cot^2 \pi s + \cot^2 \pi t \right),
\end{equation}
\begin{equation}\label{nu2}
\nu_2=\frac{1}{6}(4\zeta(3)-\Psi''(s)-\Psi''(1-s)-\Psi''(t)-\Psi''(1-t)).
\end{equation}
With the $q$-parametrization
\begin{equation}\label{5F4-lnq-m0m1}
\ln(uq)=H_2+\ln(uz)-\nu_0,
\end{equation}
equation (\ref{ecu5F4emes}) can be written as
\begin{equation}\label{ecu-lnq}
\frac{1}{6} \ln^3 (uq) -\nu_1 \ln (uq)-\nu_2-T(q)=0,
\end{equation}
where $T=\frac{1}{6}H_2^3-H_0$. On the other hand, we can obtain more results using all the equations of the system (\ref{5ecuaciones}), that is,
\begin{equation}\label{5ecu}
  \begin{array}{cc}
    a_{0}a + b_{0}b + c_{0}c=m_{0}, \\
    a_{1}a + b_{1}b + c_{1}c=m_{1}, \\
    a_{2}a + b_{2}b + c_{2}c=m_{2}, \\
    a_{3}a + b_{3}b + c_{3}c=m_{3}, \\
    a_{4}a + b_{4}b + c_{4}c=m_{4}. \\
  \end{array}
\end{equation}
The idea consists in the following calculation:
\begin{multline}\label{multiply-ecus}
2m_0m_4-2m_1m_3+m_2^2 = (2a_0a_4-2a_1a_3+a_2^2)a^2 + (2b_0b_4-2b_1b_3+b_2^2)b^2 + \\ (2c_0c_4-2c_1c_3+c_2^2)c^2 + 2(a_0b_4+a_4b_0-a_1b_3-a_3b_1+a_2b_2)a b +  \\ 2(a_0c_4+a_4c_0-a_1c_3-a_3c_1+a_2c_2)ac + 2(b_0c_4+b_4c_0-b_1c_3-b_3c_1+b_2c_2)bc.
\end{multline}
If we define
\[ \tau^2=2m_0m_4-2m_1m_3+m_2^2, \]
then substituting identities (\ref{paren-1}), (\ref{paren-2}), (\ref{paren-4}), (\ref{paren-6}), (\ref{paren-5}), (\ref{paren-3}) in (\ref{multiply-ecus}) we obtain
\begin{equation}\label{cz-tau}
\tau^2=\frac{c^2}{1-z},
\end{equation}
The definition of the matrix $M$ implies that
\begin{equation}\label{tau-al-cua}
\tau^2=\frac{j}{12}+\frac{k^2}{4}+\frac{5k}{3}+1+
(\cot^2 \pi s)(\cot^2 \pi t)+(1+k)(\cot^2 \pi s +\cot^2 \pi t).
\end{equation}
\par Relation (\ref{deter-eq0}) is a necessary but not sufficient condition for the system (\ref{5ecu}) to be compatible. In order to have a necessary and sufficient condition for it, we also impose that the equation for $c$ given in (\ref{cz-tau}) is compatible with the three first equations in (\ref{5ecu}).
Solving $c$ from them by Cramer's rule, we get
\[ c=m_0 \frac{a_1b_2-a_2b_1}{M_3}-m_1 \frac{a_0b_2-a_2b_0}{M_3}+m_2 \frac{a_0b_1-a_1b_0}{M_3}. \]
Defining the function $J=(a_1b_2-a_2b_1)/(a_0b_1-a_1b_0)$, and using identities (\ref{M1M2M3}), we get
$c=(m_0 J-m_1 H_2+m_2) \sqrt{1-z}$. From this we obtain
\[ \tau=m_0 J-m_1H_2+m_2. \]
Using (\ref{nu0nu1nu2}), the latter can be rearranged as
\begin{equation}\label{5F4-ecu-tau}
\frac{\tau}{m_0}+\nu_1=\frac{1}{2} \left( H_2+\ln(uz)-\nu_0 \right)^2-\left( \frac{1}{2}H_2^2-J \right).
\end{equation}
Finally, the parametrization in (\ref{5F4-lnq-m0m1}) gives
\begin{equation}\label{5F4-qU}
\pi^2 \tau+\nu_1=\frac{1}{2} \ln^2(uq)-U(q),
\end{equation}
where $U=\frac{1}{2}H_2^2-J=H_1-J$. For a given value of $k$, the equation (\ref{ecu-lnq}) determines $q$. The substitution of these values of $k$ and $q$ into (\ref{5F4-qU}) determines $\tau$ and $j$. In addition, $q$ determines $z$ from (\ref{5F4-lnq-m0m1}), while $q$ and $\tau$ determine (\ref{cz-tau}). Substituting the values for $z$ and $c$ in the first two equations of (\ref{5ecu}), we obtain $a$ and $b$.

\begin{proposition}
The functions $T$ and $U$ are related by
\begin{equation}\label{UT}
U(q)= q \frac{d}{dq} T(q).
\end{equation}
\end{proposition}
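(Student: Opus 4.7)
The plan is to relate the operators $\theta = q\frac{d}{dq}$ and $\delta = z\frac{d}{dz}$ via the $q$-parametrization (\ref{5F4-lnq-m0m1}) and reduce the claim to an algebraic identity among the determinants $u_j$ and $v_j$. Logarithmic differentiation of $\ln(uq) = H_2 + \ln(uz) - \nu_0$ gives $\theta F = \delta F/(1 + \delta H_2)$ for any function $F$ of $z$. Applied to $T = \frac{1}{6}H_2^3 - H_0$, and using the relation $H_1 = \frac{1}{2}H_2^2$ (which follows from (\ref{ecu-uj})) together with $U = H_1 - J$, the claim $U = \theta T$ rearranges to
\[ J(1 + \delta H_2) = H_1 + \delta H_0. \]

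The key observation that makes both sides computable is a differential recursion between the $u_j$ and the $v_j$. Using $b_j = \delta a_j + a_{j-1}$ and $c_j = \delta b_j + b_{j-1}$ from (\ref{compo-AB}) and (\ref{compo-BC}) (with $a_{-1} = b_{-1} = 0$), a direct expansion gives
\[ \delta u_j = v_j - u_{j-1}. \]
Since $u_0 = 0$, this produces $\delta u_1 = v_1$, $\delta u_2 = v_2 - u_1$, and $\delta u_4 = v_4 - u_3$. Substituting into $H_2 = u_2/u_1$, $H_1 = u_3/u_1$, $H_0 = u_4/u_1$, the two quantities of interest collapse into
\[ 1 + \delta H_2 = \frac{u_1 v_2 - u_2 v_1}{u_1^2}, \qquad H_1 + \delta H_0 = \frac{u_1 v_4 - u_4 v_1}{u_1^2}. \]

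Taking the quotient of the two right-hand sides reproduces exactly the alternative expression for $J$ provided by (\ref{paraJ}), so the desired identity follows. The main conceptual step is isolating the recursion $\delta u_j = v_j - u_{j-1}$: it is the bridge between the purely algebraic relations established in Section 3 and the differential statement (\ref{UT}), and once it is in hand (\ref{paraJ}) does the rest. In particular, all heavy polynomial manipulations in the components of $A$, $B$, $C$ have already been packaged into the earlier lemmas, so no further grinding is required at this stage.
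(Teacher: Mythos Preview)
Your proof is correct and follows essentially the same route as the paper: both reduce the claim to the identity $J=\dfrac{H_1+zH_0'}{1+zH_2'}=\dfrac{u_1v_4-u_4v_1}{u_1v_2-u_2v_1}$ and then invoke (\ref{paraJ}). The paper asserts the passage from $\dfrac{H_1+zH_0'}{1+zH_2'}$ to the $u$--$v$ ratio without comment, whereas your recursion $\delta u_j=v_j-u_{j-1}$ makes that step explicit; this is a genuine clarification but not a different approach.
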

\begin{proof}
Differentiating (\ref{5F4-lnq-m0m1}), we get
\[ q \frac{dz}{dq}=\frac{z}{1+zH'_2}. \]
So, we have
\[
\frac{1}{2}H_2^2 - q \frac{dT}{dq}= H_1 -q T' \frac{dz}{dq}=\frac{H_1+zH'_0}{1+zH'_2}=\frac{u_1v_4-u_4v_1}{u_1v_2-u_2v_1}.
\]
To complete the proof, we use (\ref{paraJ}).
\end{proof}

\begin{corollary}
The functions $k$ and $\tau$ are related by
\[ \tau=\frac{q\ln uq}{2} \cdot \frac{dk}{dq}.  \]
\end{corollary}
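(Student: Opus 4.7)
The plan is to differentiate equation (\ref{ecu-lnq}) with respect to $q$ and then use the relations (\ref{UT}) and (\ref{5F4-qU}) to rewrite the resulting identity. The key observation is that in the expressions (\ref{nu0})--(\ref{nu2}) for $\nu_0,\nu_1,\nu_2$, only $\nu_1$ depends on $k$ (and the dependence is linear, with $d\nu_1/dk=\pi^2/2$), while $\nu_0$ and $\nu_2$ depend only on $s,t$. Since in the $q$-parametrization of this section all of $z$, $k$, $\tau$ become functions of $q$, differentiating (\ref{ecu-lnq}) treats $\nu_0,\nu_2$ as constants and leaves $\nu_1$ as the only $k$-dependent (hence $q$-dependent) coefficient.

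Concretely, applying $d/dq$ to (\ref{ecu-lnq}) yields
\begin{equation}\nonumber
\frac{\ln^2(uq)}{2q}-\frac{\nu_1}{q}-\ln(uq)\,\frac{d\nu_1}{dq}-\frac{dT}{dq}=0.
\end{equation}
Multiplying by $q$, inserting $d\nu_1/dq=(\pi^2/2)\,dk/dq$, and invoking (\ref{UT}) to replace $q\,dT/dq$ by $U$, gives
\begin{equation}\nonumber
\frac{1}{2}\ln^2(uq)-U-\nu_1=\frac{\pi^2 q\ln(uq)}{2}\cdot\frac{dk}{dq}.
\end{equation}
The left-hand side equals $\pi^2\tau$ by (\ref{5F4-qU}), so dividing by $\pi^2$ produces the claimed formula.

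There is no serious obstacle: the argument is a single differentiation plus two substitutions. The only point worth checking carefully is that in (\ref{nu0})--(\ref{nu2}) the constants $\nu_0$ and $\nu_2$ are truly independent of $q$ (they are, since they involve only the parameters $s,t$), so that the derivative of (\ref{ecu-lnq}) picks up a contribution exclusively from the $\nu_1\ln(uq)$ term and from $T(q)$. Once this is observed the identity falls out immediately, and the corollary is established.
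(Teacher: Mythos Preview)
Your proof is correct and follows exactly the route sketched in the paper: differentiate (\ref{ecu-lnq}), use the Proposition (\ref{UT}) to turn $q\,dT/dq$ into $U$, compare with (\ref{5F4-qU}), and read off $d\nu_1/dk=\pi^2/2$ from (\ref{nu1}). The only thing you add beyond the paper's one-line hint is the explicit check that $\nu_0$ and $\nu_2$ are independent of $q$, which is indeed the point that makes the differentiation clean.
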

\begin{proof}
Differentiate (\ref{ecu-lnq}) and compare to (\ref{5F4-qU}). Then, use (\ref{nu1}).
\end{proof}

\subsection{Algebraic and rational values}
We state a theorem and make some conjectures related to Expansion \ref{conje2}.
\begin{theorem}
If $z$ and $c$ are algebraic, then $\tau$ is algebraic.
\end{theorem}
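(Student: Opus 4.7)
The key observation is that the hard work has already been done in establishing equation (\ref{cz-tau}), which reads
\[
\tau^2 = \frac{c^2}{1-z}.
\]
Thus the plan is very short: I will simply invoke this identity. If $z$ and $c$ are algebraic (and $z \neq 1$, which is forced by the convergence region $-1 \le z < 1$ specified in the introduction), then the right-hand side is an algebraic number, so $\tau^2$ is algebraic, and hence $\tau$ itself is algebraic, since algebraic numbers are closed under taking square roots.

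The only point requiring a moment of care is that (\ref{cz-tau}) was derived by combining the five identities (\ref{paren-1})--(\ref{paren-6}) with (\ref{multiply-ecus}) under the assumption that $(a,b,c)$ is a genuine solution of the overdetermined system (\ref{5ecu}); in particular, this derivation is valid whenever the Ramanujan-like evaluation holds for the chosen $z$, which is precisely the setting in which we are working. So no additional compatibility argument is needed beyond what has already been established.

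I do not anticipate any real obstacle here. If anything, the subtlety lies in being precise about what ``algebraic'' refers to: $\tau$ is defined via $\tau^2 = 2m_0m_4 - 2m_1m_3 + m_2^2$, and the theorem concerns the value of $\tau$ obtained from this square root at the specific point under consideration. Since $\tau^2$ is an algebraic number whenever $z, c$ are algebraic, any branch of $\sqrt{\tau^2}$ is algebraic. This finishes the proof in essentially one line, in marked contrast to the companion direction (which would require the transcendence theory used in the analogous $_3F_2$ argument earlier in the paper).
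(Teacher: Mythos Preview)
Your proof is correct and follows exactly the same route as the paper: the author's entire argument is the single word ``Immediate from (\ref{cz-tau}),'' which is precisely your appeal to $\tau^2=c^2/(1-z)$. Your additional remarks about $z\neq 1$ and closure under square roots are fine clarifications but add nothing substantively new.
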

\begin{proof}
Immediate from (\ref{cz-tau}).
\end{proof}
\begin{conjecture}
If $z$ and $c$ are algebraic, then $\tau$ is a quadratic irrational.
\end{conjecture}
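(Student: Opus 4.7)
I would mirror the strategy used in the $_3F_2$ theorem asserting that ``$z, b$ algebraic implies $k$ rational''. The preceding theorem, based on (\ref{cz-tau}), already supplies $\tau$ algebraic under the hypothesis of the conjecture, so the entire difficulty is to upgrade ``algebraic'' to ``quadratic irrational''. In other words, the first (easy) step is done for free by the immediately preceding theorem, and only the sharpening remains.

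The natural route is to exhibit $z$ as a ``modular-like'' function of $\tau$ in the Calabi--Yau setting. Concretely, the $q$-parametrization (\ref{5F4-lnq-m0m1}), combined with the heuristic $q = u\exp(-\pi\tau)$ familiar from the $_3F_2$ case, should define an inverse mirror map $z = z(q)$ built from the periods $g_0, g_1, g_2, g_3$ of (\ref{calabi-yg}). One would then invoke a Schneider-type dichotomy: algebraic values of $z(\tau)$ at algebraic $\tau$ force $\tau$ to be either a quadratic irrational or transcendental. Combined with the algebraicity of $\tau$ given by (\ref{cz-tau}), the transcendental alternative is ruled out, leaving only the quadratic case. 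This is exactly what Schneider's theorem \cite{schneider} delivers in the $_3F_2$ setting via the parametrization $z = 4\lambda(1 - \lambda)$.

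The main obstacle is precisely that no Schneider-type transcendence theorem is currently known for general Calabi--Yau mirror maps. The Picard--Fuchs operator (\ref{pi-fu}) attached to each of the $14$ admissible couples $(s, t)$ is not uniformized by a classical modular group, and although $g_0, g_1, g_2, g_3$ span an analog of a period lattice, there is no off-the-shelf transcendence statement controlling algebraic values of $z(q)$ in the CY threefold case. A realistic plan of attack is to proceed case by case through the $14$ couples, seeking, for each, a parametrization of $z(q)$ in terms of classical or Hilbert modular functions to which Schneider's theorem can be applied directly; the interaction of such a parametrization with (\ref{cz-tau}) and the corollary $\tau = (q \ln uq / 2)\, dk/dq$ would then force $\tau$ into the quadratic regime. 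A uniform argument, however, seems to require substantial new transcendence input for periods of Calabi--Yau threefolds, which is presumably why the statement is left open as a conjecture rather than promoted to a theorem.
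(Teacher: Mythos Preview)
The paper offers no proof of this statement: it is explicitly labeled a \emph{Conjecture} and left open, immediately following the one-line theorem that $\tau$ is algebraic. So there is nothing on the paper's side to compare against; your proposal is not competing with an existing argument.

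Your write-up is therefore not a proof but an honest assessment of the difficulty, and on that score it is largely accurate: you correctly isolate the gap as the absence of a Schneider-type transcendence result for the mirror map in the Calabi--Yau setting, and you rightly note that the preceding theorem already gives algebraicity of $\tau$ via (\ref{cz-tau}). One caution, though: the heuristic $q = u\exp(-\pi\tau)$ that you import from the $_3F_2$ case does \emph{not} hold here. In the $_5F_4$ setting the link between $q$ and $\tau$ is governed by (\ref{5F4-qU}), namely $\pi^2\tau + \nu_1 = \tfrac{1}{2}\ln^2(uq) - U(q)$, which is quadratic in $\ln(uq)$ and involves the nontrivial function $U$. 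So even if one had a Schneider-type dichotomy for algebraic values of the mirror map $z(q)$, it would constrain $\ln(uq)$, not $\tau$ directly, and a further step would be needed to pass from that to the quadratic nature of $\tau$. Your case-by-case suggestion (seeking Hilbert-modular parametrizations for each of the $14$ couples) is in the spirit of the paper's concluding remarks about Humbert surfaces and the work in \cite{yangzudilin}, but as you say, a uniform argument appears to require genuinely new transcendence input.
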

\begin{conjecture}
If $z$ and $c$ are algebraic, then $k$ and $j$ are rational.
\end{conjecture}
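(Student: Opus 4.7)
The plan is to extend the modular-arithmetic argument used in the $_3F_2$ case (the theorem ``$z,b$ algebraic $\Rightarrow k$ rational'') to the Calabi--Yau $_5F_4$ setting. From the previous theorem we already have $\tau^{2}=c^{2}/(1-z)\in\overline{\mathbb{Q}}$, so the new content of the conjecture splits into two substeps: (i) upgrade $\tau^{2}\in\overline{\mathbb{Q}}$ to $\tau^{2}\in\mathbb{Q}$ (which is precisely the content of the preceding conjecture), and (ii) use the two functional equations (\ref{ecu-lnq}) and (\ref{5F4-qU}) to push \emph{rationality} — not merely algebraicity — down to $k$ and $j$ individually.

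For step (i) the natural approach mimics the $_3F_2$ proof: there $z$ is a genuine modular function of $i\tau$, and Schneider's theorem forces $i\tau$ to be a quadratic irrationality whenever $z(i\tau)$ is algebraic. In the $_5F_4$ setting the analogous input is conjectural: the mirror map $z(q)$ attached to each of the 14 Calabi--Yau pencils is expected (and in many individual cases verified) to be a ``modular function'' in a suitably generalised sense, with the periods $g_{0},\dots,g_{3}$ of (\ref{calabi-yg}) transforming like modular forms. Granted such a modularity statement, algebraicity of $z$ at $q$ forces $q$ to be of CM type, yielding $\tau^{2}\in\mathbb{Q}$.

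For step (ii), once $\tau^{2}\in\mathbb{Q}$ and $q$ is a CM point, I would read off $k$ from (\ref{5F4-qU}) rewritten as $\nu_{1}=\tfrac{1}{2}\ln^{2}(uq)-U(q)-\pi^{2}\tau$. At a CM point one expects $\ln(uq)\in\pi\cdot\overline{\mathbb{Q}}$ (so that $\ln^{2}(uq)\in\pi^{2}\mathbb{Q}$) and $U(q)/\pi^{2}\in\overline{\mathbb{Q}}$, so $\nu_{1}/\pi^{2}$ is algebraic; a Galois-invariance argument at the CM point then upgrades this to $\nu_{1}/\pi^{2}\in\mathbb{Q}$, which via (\ref{nu1}) gives $k\in\mathbb{Q}$ (the cotangent correction being an explicit algebraic quantity that becomes rational after tracing over the relevant CM field). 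Once $k$ is rational, (\ref{tau-al-cua}) solves for $j$ as an explicit expression in $\tau^{2}$, $k$ and the cotangents, and the same Galois invariance forces $j\in\mathbb{Q}$.

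The principal obstacle is step (i): there is no known Schneider-type transcendence theorem for $_5F_4$ Calabi--Yau mirror maps, and the arithmetic of the period functions $g_{0},\dots,g_{3}$ is proven only in sporadic cases (those in which the corresponding Calabi--Yau threefold is itself modular). Once (i) is granted, step (ii) should be essentially bookkeeping around the Galois structure of the relevant CM field together with the explicit form of (\ref{tau-al-cua}) and (\ref{5F4-qU}).
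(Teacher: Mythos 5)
This statement is one of three \emph{conjectures} in Section 3 of the paper: the author offers no proof, and the only rigorously established neighbouring fact is the trivial theorem that $z,c$ algebraic implies $\tau$ algebraic, via $\tau^2=c^2/(1-z)$. So there is no proof in the paper to compare yours against, and what you have written is not a proof either --- it is a conditional strategy resting on inputs that are themselves open, as you partly acknowledge. Concretely, step (i) requires (a) a modularity statement for the $_5F_4$ mirror maps $z(q)$ strong enough to make sense of ``CM points,'' and (b) a Schneider-type transcendence theorem in that setting; neither exists. In the $_3F_2$ case the whole argument works because $z$ is an honest elliptic modular function of $i\tau$ and Schneider's theorem applies; for Calabi--Yau threefolds the periods $g_0,\dots,g_3$ are generically not modular forms, and even the expected shape of the transcendence statement is unclear. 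Your step (i) therefore does not ``reduce to the preceding conjecture'' --- it restates it together with an additional unproved modularity hypothesis.

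Step (ii) has its own gaps even granting (i). You need $\ln(uq)\in\pi\cdot\overline{\mathbb{Q}}$ and $U(q)/\pi^2\in\overline{\mathbb{Q}}$ at the special point, and then a ``Galois-invariance argument'' to upgrade algebraic to rational; none of these is justified, and the last is the same kind of assertion the conjecture itself makes. Note also that in the $_3F_2$ theorem the paper gets $k\in\mathbb{Q}$ essentially for free from $\tau^2\in\mathbb{Q}$ because $k=\tau^2-1-\cot^2\pi s$ is a single rational relation; here $\tau^2$ in (\ref{tau-al-cua}) involves \emph{both} $k$ and $j$, so rationality of $\tau^2$ alone gives one equation for two unknowns, and the second equation (\ref{ecu-lnq})/(\ref{5F4-qU}) brings in the transcendental quantities $\ln(uq)$, $T(q)$, $U(q)$ whose arithmetic at special points is exactly what is not known. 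Your outline is a reasonable roadmap of \emph{why} one believes the conjecture, and it is consistent with the connections to Calabi--Yau differential equations and Humbert surfaces that the paper points to in its conclusion, but it should be presented as a heuristic, not a proof.
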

\begin{conjecture}
If $k$ and $j$ are rational, then $z$, $a$, $b$ and $c$ are algebraic.
\end{conjecture}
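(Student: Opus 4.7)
The plan is to parallel the strategy used for the algebraicity theorems in the $_3F_2$ case, with the theory of elliptic modular functions replaced by that of Calabi--Yau differential equations. Assume $k, j \in \mathbb{Q}$. From~(\ref{tau-al-cua}) and the algebraicity of $\cot^2\pi s$, $\cot^2\pi t$ for each of the $14$ admissible pairs $(s,t)$, we conclude that $\tau^2$ (and hence $\tau$) is algebraic; similarly $\nu_1/\pi^2$ is algebraic by~(\ref{nu1}).

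Next, we extract $q$ from the data. Equations~(\ref{ecu-lnq}) and~(\ref{5F4-qU}), combined with the identity $U = q\, dT/dq$ from~(\ref{UT}), determine $q$ as follows: the first yields $q$ as a function of $k$ (through $\nu_1$), and the second then fixes $\tau$, hence $j$, in terms of $k$ and this $q$. The rationality of $j$ on top of that of $k$ is therefore the extra constraint that the resulting $\tau$ also be algebraic, which pins $q$ down to a distinguished ``special point'' of the mirror map $q \mapsto z(q)$ associated to the Calabi--Yau differential equation~(\ref{diff-eq-calabi}).

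The main step, and the principal obstacle, is to show that at such a special $q$ the value $z(q)$ is algebraic. In the $_3F_2$ setting this followed from Schneider's theorem applied to the modular parametrization of $z(\tau)$; here, one needs the analogous transcendence-theoretic principle for mirror maps of Calabi--Yau differential equations, in the spirit of the ``rank-two attractor'' phenomenon from mirror symmetry. Such statements are currently available only in isolated cases among the $14$ families in question, which is why the result is posed as a conjecture rather than a theorem. Granted the algebraicity of $z$, the rest is routine: $c = \tau \sqrt{1-z}$ is algebraic by~(\ref{cz-tau}) (with the appropriate sign), and $a$, $b$ follow by solving any two of the five linear equations in~(\ref{5ecu}), together with the $_5F_4$ analog of the Zudilin-type identity for $1/\pi$ invoked in the proof of the corresponding theorem for Expansion~\ref{conje1}.
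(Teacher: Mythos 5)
The statement you were given is posed in the paper as a conjecture, and the paper offers no proof of it; there is no ``paper's own proof'' to compare against. Your proposal, to its credit, is explicit that it is not a proof either: the pivotal step --- that the value $z(q)$ of the mirror map is algebraic at the distinguished point $q$ singled out by the rationality of $k$ and $j$ --- is precisely the open transcendence-theoretic question, the would-be analogue of Schneider's theorem that makes the $_3F_2$ case work. Without such a substitute in the Calabi--Yau setting, nothing in your argument forces $z$ to be algebraic, and the chain of deductions collapses there. Your preliminary reductions are sound as far as they go: rationality of $k$ and $j$ gives algebraicity of $\tau^2$ via (\ref{tau-al-cua}) and of $\nu_1/\pi^2$ via (\ref{nu1}), and equations (\ref{ecu-lnq}) and (\ref{5F4-qU}) together with $U=q\,dT/dq$ from (\ref{UT}) do single out special values of $q$. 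One small imprecision: given $k$, equation (\ref{ecu-lnq}) already determines $q$, and then (\ref{5F4-qU}) determines $\tau$ and hence $j$; so the hypothesis is that this induced $j$ happens to be rational, not that $j$ is a free second parameter.

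A second, independent gap sits at the end of your argument. Even granting $z$ algebraic, you get $c=\tau\sqrt{1-z}$ algebraic from (\ref{cz-tau}), but to conclude that $a$ and $b$ are algebraic you invoke ``the $_5F_4$ analog of the Zudilin-type identity'' used in the theorem for Expansion \ref{conje1}. No such analogue is established in the paper or, to date, in the literature for the $1/\pi^2$ series; in the $_3F_2$ case that identity (the existence of an algebraic $\delta(\tau)$ with $1/\pi=\delta(\tau)y_0+b z y_0'$) is a genuine theorem cited from \cite{chan2}, \cite{yangzudilin} and \cite{zudilin}, and it carries the entire weight of the algebraicity of $a$. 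So your proposal correctly maps out where a proof would have to go and correctly locates the two missing ingredients, but it does not prove the statement --- consistent with the fact that the author states it only as a conjecture.
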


\subsection{Some computations}
For our computations we have chosen the nilpotent matrix $X$ of order $5$ such that all its entries are zero except $x_{12}=x_{23}=x_{34}=x_{45}=1$. With this choice the components of the matrices $A$, $B$ and of $C$ are the entries of their first rows. Here we present the computations in the case $s=t=1/2$.
The $q$-parametrization in (\ref{5F4-lnq-m0m1}) in this case is
\begin{equation}\label{5F4-lnq}
\ln(uq)=-10 \ln 2 +\ln (uz) + H_2.
\end{equation}
By exponentiation, we get
\begin{equation}\label{q5F4z}
q=\frac{1}{1024} z e^{H_2},
\end{equation}
where
\begin{equation}\label{expH2-taylor}
e^{H_2}=1+320 \left( \frac{z}{2^{10}} \right)+170400 \left(\frac{z}{2^{10}}\right)^2+110694400 \left(\frac{z}{2^{10}}\right)^3+\cdots.
\end{equation}
By inversion of (\ref{q5F4z}), we obtain
\begin{equation}\label{mirrorz}
z(q)=1024(q-320q^2+34400q^3-1894400q^4+62019120q^5-\cdots).
\end{equation}
And the function $T(q)$, that we have computed for this case, is
\begin{equation}\label{Tq}
T(q)=160\left(q+\frac{347}{2^3}q^2+\frac{91072}{3^3}q^3+\frac{21827771}{4^3}q^4+\frac{5002311376}{5^3}q^5+\cdots \right).
\end{equation}
We have solved the equations of the preceding section numerically and "identified" algebraic solutions when $k=1$ and $k=5$.
They are
\begin{equation}\label{algebraic1}
k=1, \quad \tau=\sqrt{5}, \quad j=25, \quad z=-\frac{1}{4}, \quad a=\frac{1}{8}, \quad b=1, \quad c=\frac{5}{2}
\end{equation}
and
\begin{equation}\label{algebraic2}
k=5, \quad \tau=\sqrt{41}, \quad j=305, \quad z=-\frac{1}{1024}, \quad a=\frac{13}{128}, \quad b=\frac{45}{32},
\quad c=\frac{205}{32}.
\end{equation}
\par We know that in the families $(s,t)$: $(1/2, 1/4)$, $(1/4, 1/6)$, $(1/4, 1/3)$, $(1/3, 1/6)$ and $(1/8, 3/8)$ there are also Ramanujan-like series (some of them only conjectured), all given in \cite{guilleraEMpi2}, and we have not found any in the other families.

\section{Conclusion}
Many steps of our theoretical development have been suggested by experimental computations carried over using nilpotent matrices. Following this idea, our symbolic calculations with Maple 9 allowed us to discover important relations among the entries of the matrices $A$ and $B$ (Sect. 2) and $A$, $B$ and $C$ (Sect. 3). Later, we found the proofs of them given in this paper.
\par
We have solved completely the study of the expansions associated to the Rama\-nujan-type series for $1/\pi$
(type $_3F_2$) because they lead to the well-known theory of elliptic modular functions.
Concerning the analysis of the expansions associated to the Ramanujan-like series for $1/\pi^2$ (type $_5F_4$) we point out the following interesting connection with the Calabi--Yau differential equations. If $s=t=1/2$, the expansions of $z(q)$ and $K(q)=-1+(q \frac{d}{dq})^2 U$ coincide, respectively, with those obtained from the definitions of the mirror map and the Yukawa coupling given in \cite{yangzudilin} for the same case.
A difficult aspect seems to be determining rigourously the values of $j$ and $k$ which lead to Ramanujan-like series for $1/\pi^2$. Related to this, we believe that the coefficients of the Humbert surfaces found experimentally in \cite[Sect. 6]{yangzudilin} depend only on the values of $s$, $t$ and $j$, $k$. In addition, we observe, that the value of $\tau^2$ coincide with the discriminant of those Humbert surfaces.
\par
Even more difficult will certainly be the analysis of the expansions corresponding to higher degree series.
The only known example is
\begin{equation}\label{boris}
\frac{1}{32} \sum_{n=0}^{\infty}  \frac{\left(\frac{1}{2}\right)_n^7}{(1)_n^7} \frac{1}{2^{6n}} (168 n^3+76 n^2+14n+1)={1 \over \pi^3},
\end{equation}
which remains unproved and was discovered by B. Gourevitch \cite{guilleraEMpi2}.  By numerical calculations we guess that if we replace $n$ with $n+x$ in the summands of this series we have the following expansion
\begin{equation}\label{rama-gui2-expan}
\frac{1}{\pi^3}-k \frac{1}{\pi} \frac{x^2}{2!}+j \pi \frac{x^4}{4!}-l \pi^3 \frac{x^6}{6!} + O(x^7),
\end{equation}
with $k=2$, $j=32$ and $l=4112$.

\section*{Acknowledgments} I thank Wadim Zudilin for going over the successive versions of this paper carefully helping with his corrections, important comments and useful suggestions.

\enddocument
\begin{thebibliography}{0}

\bibitem{almkvist0} G. Almkvist, Calabi-Yau differential equations of degree 2 and 3
and Yifan Yang's pullback. Available at http://arxiv.org/abs/math/0612215 (2006).

\bibitem{almkvist} G. Almkvist and W. Zudilin, Differential equations, mirror maps and zeta values, in {\it Mirror
Symmetry V}, ed. N. Yui, S.-T. Yau, J.D. Lewis. AMS/IP Stud. Adv. Math. Vol. \textbf{38}, Providence RI, (Amer. Math. Soc. $\&$ Int. Press, 2007), pp. 481-515; also available at arXiv: math/0402386 [math.NT] (2004).

\bibitem{baruah1} N. D. Baruah and B. Berndt, Eisenstein series
and Ramanujan-type series for $1/\pi$, \textit{Ramanujan J}, to appear.

\bibitem{baruah2} N. D. Baruah and B. Berndt, Ramanujan's series for $1/\pi$ arising from his cubic and quartic
theory of elliptic functions, \textit{J. Math. Anal. Appl.} \textbf{128} (2008) 357-371.

\bibitem{borweinagm} J. Borwein and P. Borwein, \textit{Pi and the AGM:
A Study in Analytic Number Theory and Computational Complexity} (Canadian Mathematical Society Series of Monographs and Advanced Texts, John Wiley, New York, 1987).

\bibitem{chan} H. H. Chan, W. C. Liaw and V. Tan, Ramanujan's class
invariant $\lambda_n$ and a new class of series for $1/\pi$, \textit{J. London Math. Soc.} \textbf{64} (2001) 93-106.

\bibitem{chan2} H. H. Chan, S. H. Chan and Z. Liu,
Domb's numbers and Ramanujan-Sato type series for $1/\pi$, \textit{Adv. Math.} \textbf{186} (2004) 396-410.

\bibitem{chen} Y. H. Chen, Y. Yang, N. Yui and C. Erdenberger, Monodromy of Picard-Fuchs differential equations for
Calabi-Yau threefolds, \textit{J. Reine Angew. Math.} \textbf{616} (2008) 167-204; also available at arXiv: math/0605675 [math.AG].

\bibitem{chudnovsky} D. Chudnovsky and G. Chudnovsky, Approximations and
Complex Multiplication According to Ramanujan, in \textit{Ramanujan
Revisited: Proceedings of the Centenary Conference, University of
Illinois at Urbana-Champaign}, ed. G. E. Andrews, R. A. Askey, B. C. Berndt, K. G.Ramanathan, R. A. Rankin. (Academic Press, Boston, 1987), pp. 375-472.

\bibitem{guilleraEMpi2} J. Guillera, About a new kind of Ramanujan type series,
\textit{Exp. Math.} \textbf{12} (2003) 507-510.

\bibitem{guilleraEMconj} J. Guillera, A new method to obtain series for $1/\pi$ and $1/\pi^2$,
\textit{Exp. Math.} \textbf{15} (2006) 83-89.

\bibitem{ramanujan} S. Ramanujan, Modular equations and approximations to
$\pi$, \textit{Quart. J. Math.} \textbf{45} (1914) 350-372.

\bibitem{schneider} Th. Schneider, Arithmetische Untersuchungen elliptischer integrale,
\textit{Math. Ann.} \textbf{113} (1937) 1-13.

\bibitem{yang} Y. Yang, On differential equations satisfied by modular forms,
\textit{Mathematische Zeitschrift} \textbf{246} (2004) 1-19.

\bibitem{yangzudilin} Y. Yang and W. Zudilin, On  $\operatorname{Sp}_4$ modularity of Picard--Fuchs
differential equations for Calabi--Yau threefolds, (with an appendix by V.~Pasol), preprint MPIM 2008-36, 33 pages;
also available at arXiv:\,math/0803.3322.

\bibitem{zudilin} W. Zudilin, Ramanujan-type formulae for $1/\pi$: A second wind?,
in \textit{Modular forms and string duality}, ed. N. Yui, H. Verrill, C. Doran. Fields Inst. Commun. Ser. Vol. \textbf{54} (Amer. Math. Soc. \& Fields Inst., 2008), pp. 179-188; also available at arXiv: math/0712.1332 [math.NT].

\end{thebibliography}
